\newtheorem{theorem}{Theorem}[section]
\newtheorem{lemma}[theorem]{Lemma}
\newtheorem{corollary}[theorem]{Corollary}
\newcommand{\al}{\alpha}
\newcommand{\be}{\beta}
\newcommand{\bexp}{{\sqrt{\frac{1+\be}{2}}}}
\newcommand{\dt}{\delta}
\newcommand{\e}{\varepsilon}
\newcommand{\g}{\gamma}
\newcommand{\I}{\mbox{Im\,}}
\newcommand{\ka}{\kappa}
\newcommand{\lbda}{\lambda}
\newcommand{\mb}{\mathbb}
\newcommand{\pr}{\mb{P}}
\newcommand{\R}{\mbox{Re\,}}
\newcommand{\ska}{\sqrt{\kappa}}
\newcommand{\vphi}{\varphi}
\begin{document}      
\title{Convergence of an algorithm simulating Loewner curves}         
\author{Huy Tran - University of Washington}
\date{\today}          
\maketitle
\begin{abstract}
The development of Schramm--Loewner evolution (SLE) as the scaling limits of discrete models from statistical physics makes direct simulation of SLE an important task. The most common method, suggested by Marshall and Rohde \cite{MR05}, is to sample Brownian motion at discrete times, interpolate appropriately in between and solve explicitly the Loewner equation with this approximation. This algorithm always produces piecewise smooth non self-intersecting curves whereas SLE$_\ka$ has been proven to be simple for $\ka\in[0,4]$, self-touching for $\ka\in(4,8)$ and space-filling for $\ka\geq 8$. In this paper we show that this sequence of curves converges to SLE$_\ka$ for all $\ka\neq 8$ by giving a condition on deterministic driving functions to ensure the sup-norm convergence of simulated curves when we use this algorithm. 
\end{abstract}  
\section{ Introduction}
	The Loewner equation uses a real valued function, the Loewner driving term, to describe a family of decreasing simply connected domains in the complex plane. It was first introduced by Charles Loewner as an attempt to solve the Bieberbach conjecture. This conjecture was completely solved by de Branges with the Loewner equation as one of the key tools \cite{Branges}. Oded Schramm rediscovered the Loewner equation when he was studying the scaling limits of discrete models. In this context, he introduced the Schramm--Loewner evolution (SLE$_\ka$, $\ka\geq 0$), a random growth process in the plane \cite{Sch00}. This process is obtained from the Loewner equation with a random driving term which is $\sqrt\ka$ times Brownian motion. By the work of Lawler, Schramm, Sheffield, Smirnov, Werner and others, SLE arises as a scaling limit of various discrete models from statistical physics \cite{LSW04}, \cite{SchrammSheffield05}, \cite{SchrammSheffield09}, \cite{Smirnov01}, \cite{Smirnov10}. It is therefore very desirable to generate pictures of SLE$_\ka$ directly to help understand the discrete random paths from those models.

 We are primarily interested in the case when the driving function corresponds to a growing curve. There are so far two methods to directly simulate the Loewner equation.  The first method uses the fact that the Loewner equation is a first order ODE, and hence one can numerically solve the equation, for example using Euler's method. Some of the first simulations of the SLE curve were obtained by Vincent Beffara using this method. It produces a good approximation to the SLE$_\ka$ hull (not the path) for $\ka>4$. One disadvantage is that it does not show the curve corresponding to the driving function but only a neighborhood of it, see Section \ref{sec:algorithm}. We note that SLE$_\ka$ is a random curve for all $\ka\geq 0$, see \cite{RS} and \cite{LSW04}. This approach has not been often used, and we do not discuss its convergence here.

 The second method for simulating SLE was suggested by Marshall and Rohde \cite{MR05}. The algorithm discretizes the driving function and square-root-interpolates it. As a result the algorithm approximates SLE maps by composing many basic conformal maps, which are easy to compute. The algorithm was described and implemented in \cite{K07}, \cite{K09} as well as in many other works. Oded Schramm was skeptical at first that the pictures generated from this algorithm well-present the SLE curves, according to Steffen Rohde \cite{R}. The curves simulated from the algorithm are piecewise smooth and simple, see Figure  \ref{fig:simulations}, whereas SLE$_\ka$ is a random fractal curve which is simple for $\ka\in[0,4]$, self-touching for $\ka\in(4,8)$ and plane-filling for $\ka\geq 8$. In this paper, we will prove:
\begin{theorem}
\label{theo:random}
For 
 $\ka\neq 8$, let $\g^n$ be the sequence of curves simulated from the second algorithm. Then under the half plane parametrization, the sequence $\g^n$  almost surely converges to SLE$_\ka$ in the sup-norm.
\end{theorem}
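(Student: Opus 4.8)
I would split Theorem~\ref{theo:random} into a deterministic approximation statement and a probabilistic input. The deterministic part isolates a condition $(\star)$ on a driving function $\lbda\colon[0,T]\to\mb{R}$ --- roughly, that $\lbda$ generates a curve $\g$, that the oscillation of $\lbda$ over every subinterval of length $\e$ is at most a prescribed gauge $\pe$, and that the conformal distortion $|f_t'|$ of the inverse Loewner maps near the driving point does not blow up faster than a prescribed rate as one approaches the tip. The claim to prove is: if $\lbda$ satisfies $(\star)$, then the curves $\g^n$ produced by the Marshall--Rohde square-root interpolation on a mesh of size $h_n\to 0$ converge to $\g$ in the sup-norm under the half-plane parametrization. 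Granting this, Theorem~\ref{theo:random} reduces to showing that for $\ka\neq 8$ the driving function $\sqrt\ka B$ almost surely satisfies $(\star)$ on every compact interval.

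For the deterministic part I would proceed in three steps. \emph{Step 1 (driving functions converge).} Since the square-root interpolant agrees with $\lbda$ at the mesh points and its excursion on each subinterval is bounded by the oscillation of $\lbda$ there, continuity of $\lbda$ already forces $\lbda^n\to\lbda$ uniformly on $[0,T]$; by the standard stability of the Loewner ODE this yields Carath\'eodory convergence of the hulls $K^n_t\to K_t$, locally uniformly in $t$. \emph{Step 2 (uniform modulus of continuity for $\g^n$).} This is the heart of the matter and where $(\star)$ is used: for $s<t$ one bounds $|\g^n(t)-\g^n(s)|$ by the diameter of $K^n_t\setminus K^n_s$, which is at most the supremum of $|(f^n_s)'|$ over the relevant region near the tip times the diameter of $g^n_s(K^n_t\setminus K^n_s)$; the latter is controlled by $\sqrt{t-s}$ and by the oscillation of $\lbda^n$ on $[s,t]$, hence by $\vphi$, while the distortion factor is controlled by $(\star)$ with constants independent of $n$ because the modulus-of-continuity hypothesis on $\lbda^n$ is inherited from $\lbda$ uniformly. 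This produces a modulus $\psi$, independent of $n$, with $|\g^n(t)-\g^n(s)|\le\psi(|t-s|)$. \emph{Step 3 (pass to the limit).} Arzel\`a--Ascoli then gives sup-norm convergent subsequences; any subsequential limit is a curve generating the limiting hulls $K_t$ from Step~1 and starting at $\lbda(0)$, hence equals $\g$ by uniqueness of the curve generating a given increasing hull family, so the whole sequence converges.

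For the probabilistic input I would first invoke that SLE$_\ka$ is generated by a curve for every $\ka\neq 8$ \cite{RS},\cite{LSW04}, so the target object is of the admissible type. It then remains to verify $(\star)$ for $\sqrt\ka B$ almost surely. The modulus-of-continuity part follows from L\'evy's modulus of continuity theorem for Brownian motion, which supplies an explicit almost sure gauge $\vphi$. The distortion part is exactly what the SLE derivative moment estimates deliver: for $\ka\neq 8$ there is an exponent $p$ for which $\mb{E}\big[|f_t'(\lbda(t)+i\dt)|^{\,p}\big]$ stays summably small as $\dt\to 0$, and a chaining/Borel--Cantelli argument over dyadic times and heights --- the same one used to prove H\"older continuity of the SLE trace --- upgrades this to the almost sure bound required by $(\star)$. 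The exclusion of $\ka=8$ enters precisely here: at $\ka=8$ the trace admits no positive H\"older exponent and the relevant moment bounds degenerate, so $(\star)$ cannot be verified by this route.

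I expect Step~2 to be the main obstacle: getting the modulus of continuity of the simulated curves \emph{uniformly in $n$}. The simulated curves are simple piecewise-linear slit curves, so for $\ka\in(4,8)$ they must thread through ever-narrowing fjords approximating the self-touchings of SLE, and the square-root interpolation is least faithful exactly near the tip, where the parametrized distance is hardest to control; converting the conformal-distortion estimates into an $n$-independent bound there --- uniformly along the whole curve and uniformly as the mesh refines --- is the crux of the argument, and its breakdown at $\ka=8$ is what forces the restriction $\ka\neq 8$.
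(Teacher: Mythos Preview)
Your overall architecture --- reduce to a deterministic statement under hypotheses $(\star)$, then verify $(\star)$ almost surely for $\sqrt\ka B$ via L\'evy's modulus and the derivative moment bounds of \cite{RS}, \cite{JL} --- matches the paper exactly, and your probabilistic Step is essentially the same as Corollary~\ref{appl-sle}. The divergence, and the gap, is in the deterministic core.

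In your Step~2 you want a modulus of continuity for $\g^n$ that is \emph{uniform in $n$}, and you propose to get it from $(\star)$. But $(\star)$ bounds $|\hat f_t'(iy)|$ for the \emph{limiting} maps $\hat f_t$, not for the approximating maps $\hat f^n_t$. The oscillation bound on $\lbda$ does transfer to $\lbda^n$ uniformly, as you say; the derivative bound does not. There is no mechanism in your outline that produces $|(\hat f^n_t)'(iy)|\le c\,y^{-\be}$ uniformly in $n$, and without it your diameter-times-distortion estimate for $|\g^n(t)-\g^n(s)|$ yields only the trivial $O(y^{-1})$ distortion, which is not summable and gives no equicontinuity. (One could try to prove such a uniform bound directly for the piecewise square-root drivers, but that is a substantial task and is not what you wrote.) Your Step~3 is also not innocent for $\ka\in(4,8)$: Carath\'eodory convergence of hulls together with uniform convergence of a sequence of \emph{simple} curves does not automatically identify the limit curve with the generator of the limit hulls when the latter is self-touching; some extra argument is needed to pin down $\tilde\g=\g$.

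The paper sidesteps both issues by never asking for equicontinuity of the $\g^n$. Instead it estimates $|\g(t)-\g^n(t)|$ directly via the splitting
\[
|\hat f_{t_k}(z)-\hat f^n_{t_k}(w)|\le |\hat f_{t_k}(z)-\hat f_{t_k}(w)|+|\hat f_{t_k}(w)-\hat f^n_{t_k}(w)|,
\]
where $z=\g_{t_k}(s)$ and $w=\g^n_{t_k}(r)$ are suitable preimage points shown to lie in a common box $A_{n,c,\phi}$. The first term uses only $\hat f_{t_k}$ and is handled by $(\star)$ plus hyperbolic distortion (Lemmas~\ref{theo:dilate}, \ref{theo:Pom}). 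The second term is a perturbation estimate for the upward Loewner flow under a small change of driver, Lemma~\ref{theo:JRW} from \cite{JRW}; crucially that lemma is asymmetric and needs the good bound $|\hat f_{t_k}'|\le c\,y^{-\be}$ on only one of the two maps, while for $\hat f^n_{t_k}$ the universal bound $|(\hat f^n_{t_k})'|\le C(y^{-1}+1)$ suffices. This is exactly what lets the argument go through without ever controlling $|(\hat f^n_t)'|$ beyond the trivial estimate, and it also yields an explicit rate $n^{-\frac12(1-\sqrt{(1+\be)/2})}$ rather than a soft Arzel\`a--Ascoli conclusion.
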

It is known that, for all $\ka$, the sequence $\g^n$ converges to SLE in the context of Carath\'eodory convergence \cite{L} and Cauchy transforms of probability measures \cite{Bauer}. However these types of convergence relate to Loewner chains rather than curves, see \cite[Chapter 4]{L} and respectively \cite{Bauer} for details. For $\ka\leq 4$, when one views curves as compact sets, the sequence $\g^n$ converges almost surely to SLE$_\ka$ in Hausdorff metric \cite[Section 7]{BJK}. A general principle is to set up a theorem for the deterministic Loewner equation and then translate the result into the SLE context. The theorem \ref{theo:random} will follow from a more general theorem for deterministic curves. In particular, we will show that there is a class of driving functions for which the sup-norm convergence of approximation curves occurs, see Theorem \ref{theo:deterministic} for the details of the statement. It is shown in \cite{MR05}, \cite{Lind} and \cite{LR} that driving functions whose H\"older-1/2 norms are less than 4 generate simple curves and that the Hilbert space filling curve is generated by a H\"older-1/2 function. Our Theorem \ref{theo:deterministic} is also applied to these driving functions.
\begin{corollary}\label{theo: Hilbert curve}
Consider the driving function that generates the Hilbert space filling curve or that has H\"older-1/2 norm less than 4. Then the sequence of curves simulated from the algorithm for this driving function converges uniformly.
\end{corollary}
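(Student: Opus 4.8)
\emph{Proof idea.} The plan is to deduce Corollary~\ref{theo: Hilbert curve} directly from Theorem~\ref{theo:deterministic} by checking that each of the two families of driving functions lies in the class to which that theorem applies. Both families are already known to generate curves: by \cite{MR05} a driving function with H\"older-$1/2$ norm strictly less than $4$ generates a simple curve, and by \cite{LR} the driving function of the Hilbert space-filling curve generates a (necessarily non-simple) curve. So the content is entirely to verify the quantitative regularity hypothesis of Theorem~\ref{theo:deterministic}: a uniform modulus of continuity for the curve under the half-plane parametrization together with the mild non-degeneracy of the associated inverse Loewner maps near the driving point that the theorem requires, uniformly over all sub-arcs of the curve.

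For a driving function $\lambda$ with $\|\lambda\|_{1/2}<4$ this should be bookkeeping. By \cite{MR05} (see also \cite{Lind}) the generated hull is a quasi-slit: the inverse maps $f_t=g_t^{-1}$ extend H\"older-continuously up to the boundary, with derivative estimates near the driving point of the form $|f_t'(\lambda_t+iy)|\le C\,y^{-1+\theta}$, where $C,\theta>0$ depend only on $\|\lambda\|_{1/2}$ and are uniform in $t$. I would quote the precise estimate from \cite{MR05}, check that it rescales so that the same modulus governs every sub-arc $\gamma[s,t]$ (this uniformity over sub-arcs is exactly what Theorem~\ref{theo:deterministic} asks), and then invoke Theorem~\ref{theo:deterministic}.

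For the Hilbert space-filling curve $\gamma$ the argument is more delicate, since $\gamma$ is as rough as a curve can be and the sharp quasi-slit estimates are unavailable (indeed its H\"older-$1/2$ norm is necessarily at least $4$). Here I would exploit the exact self-similarity of the construction in \cite{LR}: up to the Loewner time change, the driving function is invariant under the dyadic rescaling $z\mapsto z/2$, $t\mapsto t/4$, so the required uniform estimate for all sub-arcs reduces to a single estimate for one ``generation'' of the curve, which then propagates to every scale by self-similarity. Alternatively one can read the needed control off the refined estimates on the conformal maps already obtained in \cite{LR}. Either route supplies the hypothesis of Theorem~\ref{theo:deterministic}, and the uniform convergence of $\gamma^n$ follows.

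I expect the Hilbert-curve case to be the main obstacle: one must show that, although $\gamma$ is non-simple and fills a region, the associated conformal maps degenerate at a controlled rate with a modulus that is \emph{uniform over all sub-arcs}, and this is precisely the place where something special about the Hilbert curve --- its self-similarity, or the explicit analysis of \cite{LR} --- is needed; a soft argument will not do. The $\|\lambda\|_{1/2}<4$ case, by contrast, is immediate once the quasi-slit estimates of \cite{MR05} are fed into Theorem~\ref{theo:deterministic}.
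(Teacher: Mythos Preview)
Your overall plan---verify the two hypotheses of Theorem~\ref{theo:deterministic} (weak H\"older-$1/2$ for $\lambda$, and the derivative bound~(\ref{condition}) uniformly in $t$) and then invoke that theorem---is exactly what the paper does, and your treatment of the $\|\lambda\|_{1/2}<4$ case via the quasi-slit estimates of \cite{MR05} is essentially the same argument. Where you diverge is the Hilbert curve: you propose a self-similarity reduction (or, as an alternative, extracting bounds from \cite{LR}), and you flag this as the ``main obstacle.'' The paper instead handles \emph{both} cases in one stroke: \cite{MR05}, \cite{Lind} and \cite{LR} already show that the complementary domains $H_t=\mb{H}\setminus K_t$ are John domains (uniformly in $t$), and then the general theory in \cite[Chapter~5]{Pommerenke} converts the John property directly into the derivative estimate~(\ref{condition}). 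So no bespoke self-similarity argument is needed for the Hilbert curve; the John-domain route is both shorter and unifies the two cases. Your ``alternative'' of reading the estimates off \cite{LR} is in fact the efficient path, once one observes that the relevant output of \cite{LR} is precisely the John property.

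One minor remark: the hypothesis you describe as ``a uniform modulus of continuity for the curve'' is not an input to Theorem~\ref{theo:deterministic} but a consequence (see~(\ref{uni cont})); the only analytic input beyond weak H\"older-$1/2$ is the pointwise derivative bound~(\ref{condition}) on $\hat f_t'(iy)$, uniformly in $t$.
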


We note that Theorem \ref{theo:deterministic} also provides the convergence rate of the simulation. The key is to estimate how the curve changes when we modify the driving function since the driving functions of simulated curves converge uniformly. There are two key estimates in the proof of Theorem \ref{theo:deterministic}. One is the boundary behavior of a conformal map and the other is the perturbation of a Loewner chain when there is a small change of its driving function. The latter is a Gronwall-type estimate which appears in \cite{J} and \cite{JRW}. The two estimates are both related to the growth of the derivative of conformal maps near the boundary which will provide to the assumptions of the main theorem \ref{theo:deterministic}.

The paper is organized as follows. In section \ref{sec:background}, we begin by reviewing the Loewner equation; we then state our main result and prove some preliminary lemmas. In section \ref{sec:algorithm}, we describe the first and second algorithms simulating the Loewner equation. Then the main theorem is proved in section \ref{sec:proof}. The applications will be discussed in Section \ref{sec:app} when we show the convergence rate and discuss several variants of the algorithm.

\emph{Acknowledgment.} The author would like to thank Steffen Rohde for numerous insightful conversations. The author also would like to thank  Elliot Paquette and Brent Werness for helpful comments on early drafts of this paper.
%
%
%
\begin{figure} [h]
	
	\vspace{-2.10 in}
	\hspace{-1.5 in}
	\begin{subfigure}[h]{0.43\textwidth}
                \includegraphics[width=5.9in]{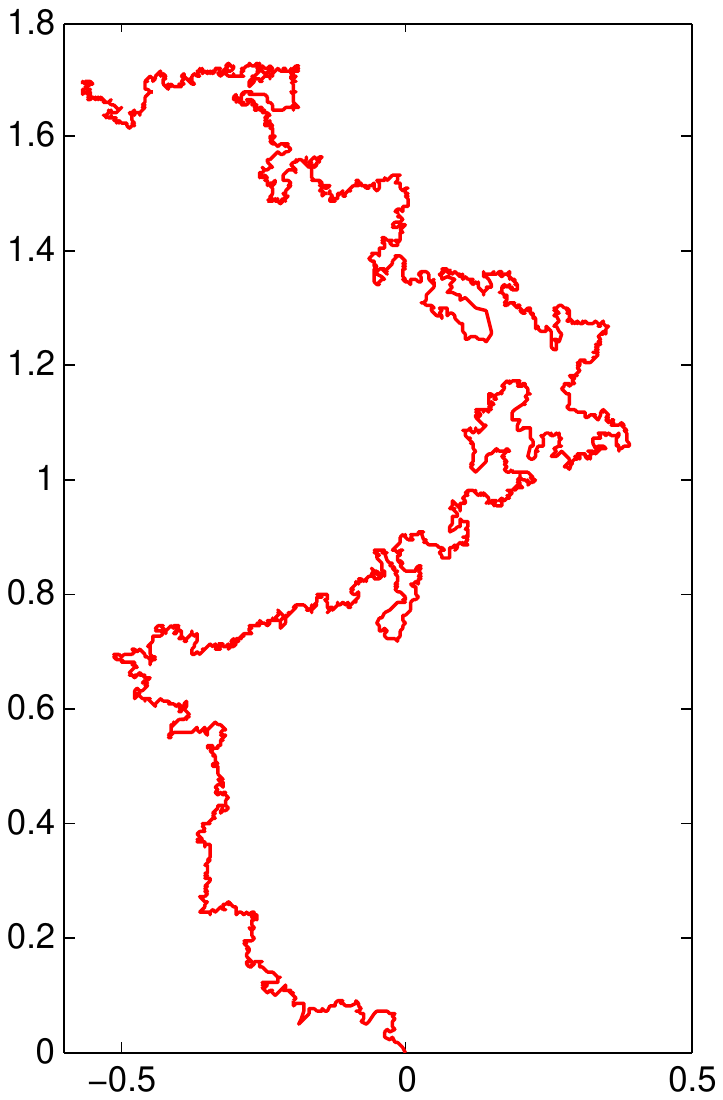}
            
        \end{subfigure}
	~~~~
        \begin{subfigure}[h]{0.1\textwidth}
             \includegraphics[width=5in]{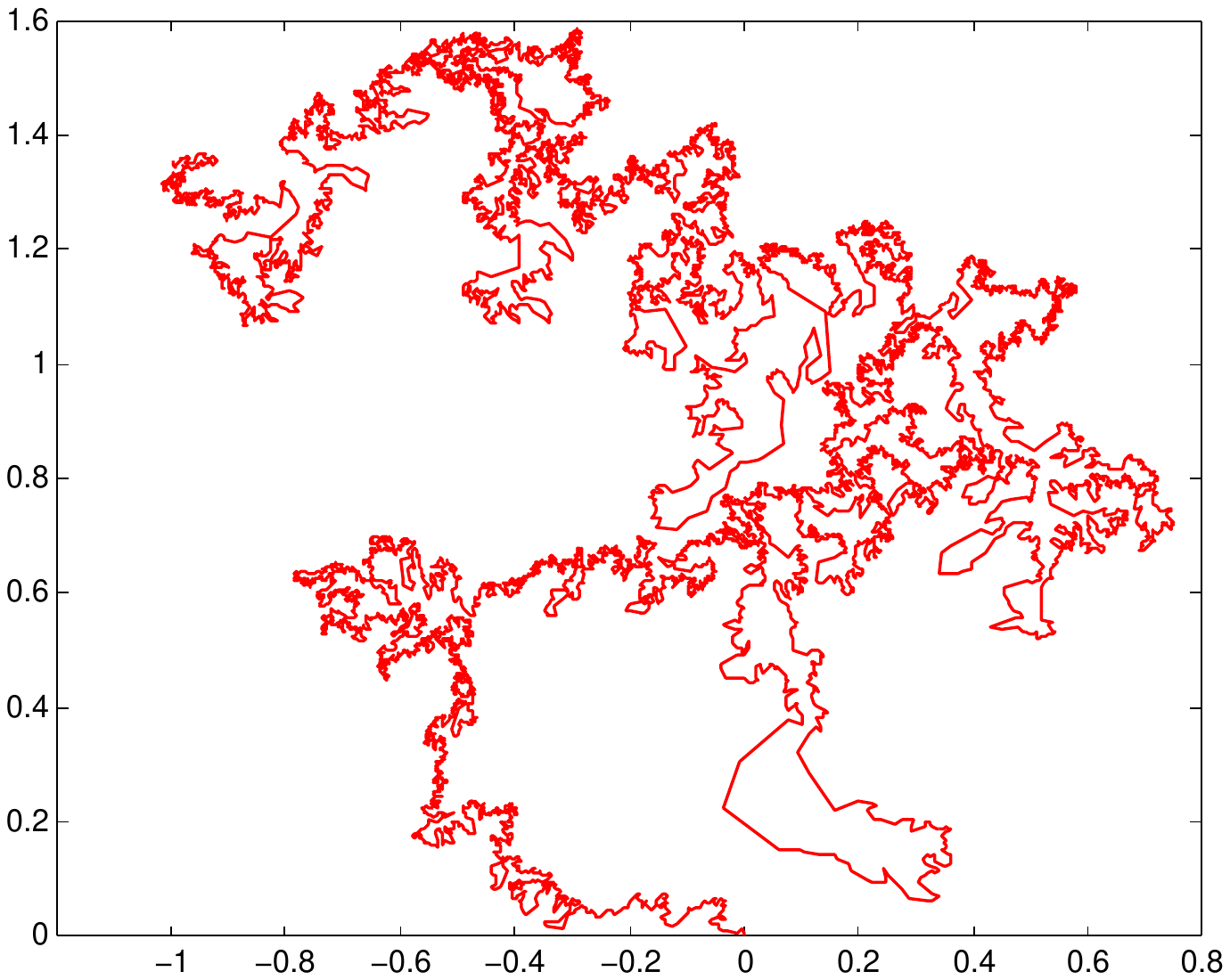}
        \end{subfigure}%
	
	\vspace{-2 in}
 	\caption{Simulations of SLE$_{8/3}$ (left) and SLE$_6$ (right) from the same Brownian motion sample with 12800 points}\label{fig:simulations}
\end{figure}
\section{ Loewner equation, algorithms, main result}
\label{sec:background}
\subsection{Loewner equation}
There are many versions of the Loewner equation. In this paper we focus on the (downward) chordal Loewner equation in the upper half plane $\mb{H}$:
\begin{equation}
\label{downward}
 \partial_t g_t(z)=\frac{2}{g_t(z)-\lbda(t)}
\end{equation}
with the initial condition $g_0(z)=z$ for every $z\in \mb{H}$, where $\lbda(t)$ is a real-valued continuous function defined for $t\geq 0$. Sometimes we write $\lbda_t$ for $\lbda(t)$. The family of $(g_t)_{t\geq 0}$ is called the \emph{Loewner chain}.

For each point $z\in \mb{H}$, the solution of (\ref{downward}): $t\mapsto g_t(z)$ is uniquely defined up to $T_z=\inf\{t\geq 0: g_t(z)=\lbda(t)\}$. As $t$ increases, the set $K_t=\{z\in\mb{H}:T_z\leq t\}$, called the \emph{hull}, grows.  It is known that for each $t\geq 0$, $g_t$ is the unique conformal map from $H_t:=\mb{H}\backslash K_t\to \mb{H}$ satisfying the \emph{hydrodynamic normalization} at $\infty$,
$$\lim_{z\to \infty} [g_t(z)-z] = 0.$$
It is usually easier to work with the \emph{upward Loewner equation}:
\begin{equation} 
\label{upward}
\partial_t h_t(z)=\frac{-2}{h_t(z)-\xi(t)},{\hskip 0.5in} h_0(z)=z,
\end{equation}
for $z\in\mb{H}$ and real-valued continuous function $\xi(t)$.

It is not hard to show that if $g_t$, $0\leq t\leq T$ is the solution to (\ref{downward}) with a driving function $\lbda$ and $h_t$ is the solution to (\ref{upward}) with $\xi(t)=\lbda(T-t)$ then
\begin{equation*}
h_T(z)=g_T^{-1}(z).
\end{equation*}

We are interested in the case that the Loewner chain $(g_t)$ is \emph{generated by a curve} $\g$, i.e., $H_t$ is the unbounded component of $\mb{H}\backslash \g([0,t])$. It follows from Theorem 4.1 (\cite{RS}) that this is equivalent to the existence and the continuity in $t>0$ of
$$\beta(t):=\lim_{y\to 0^+} g_t^{-1}(\lbda(t)+iy).$$ 
\subsection{Algorithms simulating Loewner equations}
\label{sec:algorithm}
Let us briefly discuss the first algorithm mentioned in the introduction. This idea to simulate $K_t$ is to determine whether a point $z$ in the upper half plane $\mb{H}$ satisfies $T_z\leq t$. One cannot examine all the points in $\mb{H}$ so if $T_z\leq t$ one declares that a certain neighborhood of $z$ is in $K_t$. To calculate the blow-up time $T_z$ one needs to run equation (\ref{downward}) until $g_t(z)$ hits $\lbda(t)$. However, there is no general method to solve (\ref{downward}) with given driving function. There are a few cases one can solve explicitly, see \cite{KKN}. As a result, if $\g$ is the simple curve corresponding to $\lbda$ then the simulation of $\g([0,t])=K_t$ is a neighborhood of the actual $\g([0,t])$. This is often not a good way to visualize the curve $\g$.

We now discuss the second algorithm to simulate Loewner curves. It was first appeared in \cite{MR05}. The algorithm has also been described in \cite{K07}, \cite{K09}, where modifications and fast implementations are discussed. One advantage of this algorithm is that it always produces simple curves. For the rest of the paper, this is the algorithm we consider, unless otherwise stated.

	The algorithm is based on two observations. First, fix $s>0$, and let $(\tilde{g}_t)$ be the solution of the Loewner equation with driving function $\tilde{\lbda}(t)=\lambda(s+t), t\geq 0$. This solution can be obtained by $g_{s+t}\circ g_s^{-1}$. Indeed 
$$\partial_t g_{s+t}\circ g^{-1}_s(z)=\frac{2}{g_{s+t}\circ g^{-1}(z)-\lbda(s+t)} =\frac{2}{g_{s+t}\circ g^{-1}(z)-\tilde{\lbda}(t)},$$
and $g_s\circ g_s^{-1}(z)=z$. By the uniqueness of solution of the equation (\ref{downward}), $\tilde{g_t}(z)=g_{s+t}\circ g^{-1}_s(z)$. If we let $\widetilde{K}_t$ be the hull associated with $\tilde{g}_t$ then 
$$g_s(K_{s+t})=\widetilde{K}_t \mbox{~and~} K_{s+t}=K_s\cup g_s^{-1}(\widetilde{K}_t).$$
 So in order to compute $K_{s+t}$, one can compute $K_s$ and $g_s^{-1}$, by using the information of $\lambda$ on $[0,s]$, and compute $\widetilde{K}_t$ by using $\lambda$ on $[s,s+t]$.

The second observation is that when $\lbda$ is of the form $c\sqrt{t}+d$, for some real constants $c$ and $d$, one can solve for $K_t$ explicitly. In this case, $K_t$ is a segment in the upper half plane starting at $d\in\mb{R}$ that makes an angle $\alpha \pi$ with the positive real axis where 
$$\alpha=\frac{1}{2}-\frac{1}{2}\frac{c}{\sqrt{16+c^2}},$$
and $g_t^{-1}(z+\lbda(t))=(z+2\sqrt{t}\sqrt{\frac{\alpha}{1-\alpha}})^{1-\alpha}(z-2\sqrt{t}\sqrt{\frac{\alpha}{1-\alpha}})^{\alpha}+d$. See \cite{KKN} for a proof.

We now fix a step $n\geq 1$. Let $t_k=\frac{k}{n}$ for $0\leq k\leq n$. So $t_0=0,t_1,\cdots,t_n=1$ is a partition of $[0,1]$. We will solve the Loewner equation with driving functions $\lambda(t+t_k)$ for $0\leq t\leq \frac{1}{n}$. By the remarks above, one should approximate these driving functions by $c\sqrt{t}+d$ so that one can solve explicitly the Loewner equation. More specifically, we approximate $\lbda$ by $\lbda^n$ such that they attain the same values at $t_k$'s and that $\lbda^n$ is a scaling and translation of $\sqrt{t}$ on $[t_k,t_{k+1}]$ from $\lbda(t_k)$ to $\lbda(t_{k+1})$. Hence the function $\lbda^n$ is defined as follows:
$$ \lbda^n(t)=\sqrt{n}(\lbda(t_{k+1})-\lbda(t_k))\sqrt{t-t_k} + \lbda(t_k) \mbox { on } [t_k,t_{k+1}].$$ 
This driving function always produces a simple curve $\g^n:[0,1]\to \mb{H}\cup \{\lbda(0)\}$.

\begin{figure} [h]

\centering
\includegraphics[width=6in,height=4in]{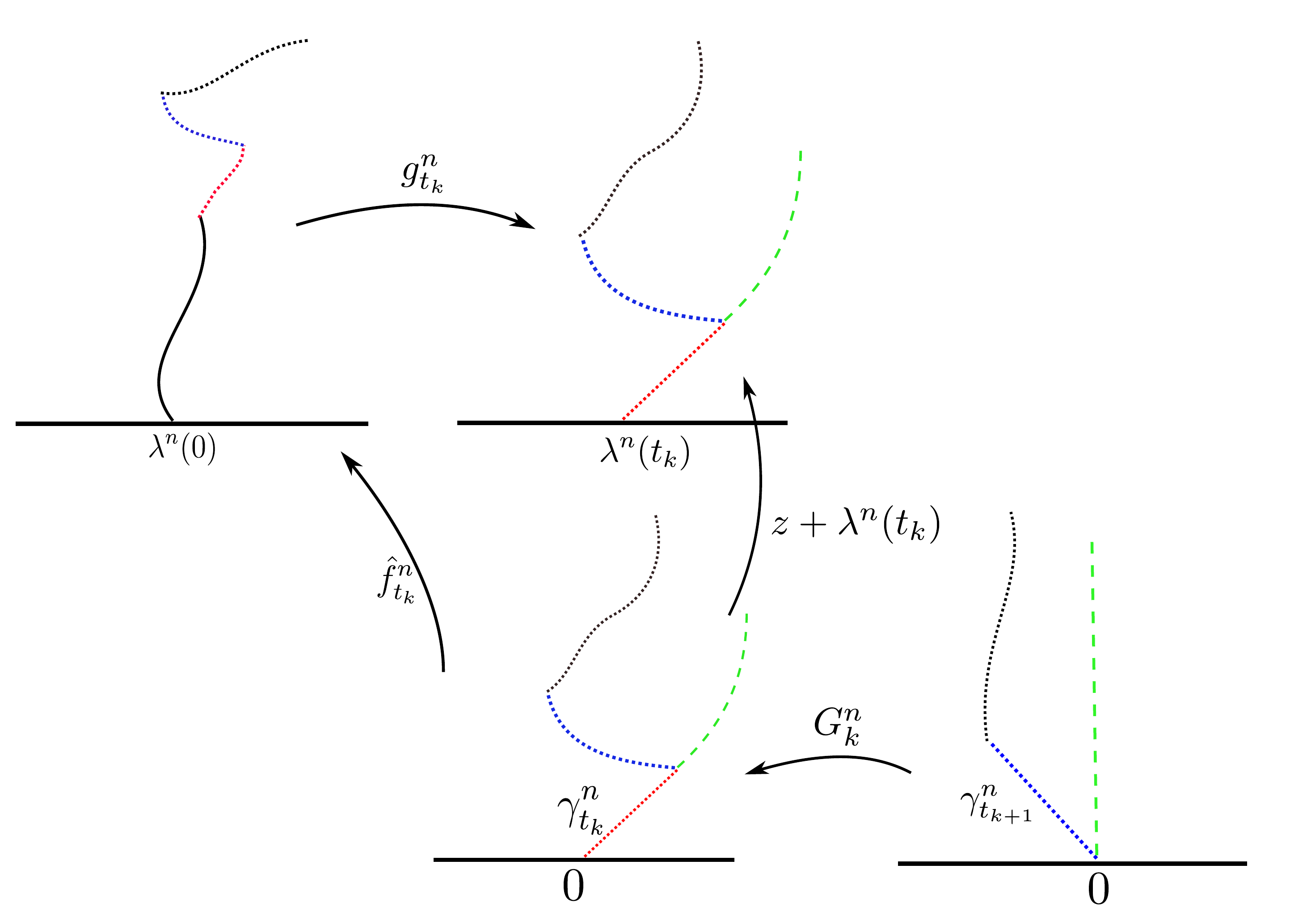}
\caption{At each step $k$, we compute $G^n_k$, $\hat{f}^n_{t_k}$ and $\g^n_{t_k}$. The $k$-th sub-arc of the simulation curve $\g^n$ is the image of $\g^n_{t_k}$ under $\hat{f}^n_{t_k}$.}
\label{figa}
\end{figure}
%
%

Denote by $(g^n_t)_{0\leq t\leq 1}$ the Loewner chain corresponding to $\lbda^n$. Let $f^n_t$ be the inverse function of $g^n_t$ and $\hat{f}^n_t(z)=f^n_t(z+\lbda^n(t))$. Define 
$$G^n_k=(\hat{f}^n_{t_k})^{-1}\circ \hat{f}^n_{t_{k+1}},$$
so that 
$$\hat{f}^n_{t_k}=G^n_{k-1}\circ G^n_{k-2}\circ\cdots\circ G^n_0.$$ 
For each $t\in [0,1]$, let $\g^n_t$ be the image of $\g^n$ under $g^n_t-\lbda^n(t)$, i.e., 
\begin{equation}
\g^n_t(s)=g^n_t(\g^n(t+s))-\lbda^n(t) \,\,\,\,\mbox{  and   }\,\,\, \g^n(t+s)=\hat{f}^n_t(\g^n_t(s))\,\,\, \mbox{ for } 0\leq s\leq 1-t.\end{equation}
We have chosen $\lbda^n$ so that $\g^n_{t_k}([0,\frac{1}{n}])$ is a segment starting at 0 and that $G^n_k$ has an explicit formula:
$$G^n_k(z)= \left(z+2\sqrt{\frac{1-\al}{n\al}}\right)^{1-\al}\left(z-2\sqrt{\frac{\al}{n(1-\al)}}\right)^\al$$
where $\al=\frac{1}{2}- \frac{1}{2}\frac{\sqrt{n}(\lbda(t_{k+1})-\lbda(t_k))}{\sqrt{16+ n(\lbda(t_{k+1})-\lbda(t_k))^2}}\in (0,1)$. See Figure \ref{figa}.

Therefore in order to compute $\g^n([0,1])$, we find $\g^n_{t_k}([0,\frac{1}{n}])$, $\hat{f}^n_{t_k}$, and then
\begin{equation}\label{form of g^n}
	\g^n(t) = \hat{f}^n_{t_k}(\g^n_{t_k}(t-t_k))\mbox{ for }  t\in [t_k,t_{k+1}), 0\leq k\leq n-1.
\end{equation}
Notice that $\lbda^n$ converges uniformly to $\lbda$ on $[0,1]$ since 
\begin{equation*}
	\sup_{t\in[0,1]}|\lbda^{n}(t)-\lbda(t)|\leq 2\sup_{s,t\in[0,1], |t-s|\leq \frac{1}{n}}|\lbda(t)-\lbda(s)|.	
\end{equation*}
We mention without proof a geometric property of $G^n_k$ which we will use later.

       \begin{lemma}
	\label{theo:G}
	Consider the conformal map $G(z)=(z+a)^{1-\al}(z-b)^{\al}$ from $\mb{H}$ to $\mb{H}$ minus a slit starting at 0, where $a,b>0$, $\al\in (0,1)$ and $\al a = (1-\al)b$. The point 0 is mapped to the tip of the slit. Then the imaginary part of $G(iy)$ is increasing on $(0,\infty)$. In particular, the image of $iy$ has a larger imaginary part than that of the tip of the slit.
	\end{lemma}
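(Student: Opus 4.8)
The plan is to show directly that $y\mapsto\I G(iy)$ has strictly positive derivative on $(0,\infty)$. Writing $W(y):=G(iy)=(a+iy)^{1-\al}(iy-b)^{\al}$ with principal branches (legitimate since $a+iy$ and $iy-b$ lie in $\mb{H}$, hence off the cut $(-\infty,0]$), I would first compute the logarithmic derivative in $y$,
$$\frac{W'(y)}{W(y)}=i\left(\frac{1-\al}{a+iy}+\frac{\al}{iy-b}\right),$$
so that, using $\I(i\zeta)=\R\zeta$,
$$\I W'(y)=\R\!\left\{W(y)\left(\frac{1-\al}{a+iy}+\frac{\al}{iy-b}\right)\right\}.$$
The whole task reduces to proving this real part is positive for every $y>0$.

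Next I would simplify the bracketed quantity. Setting $\zeta:=\dfrac{iy-b}{a+iy}$, which lies in $\mb{H}$ (its argument is $\pi-\arctan(y/b)-\arctan(y/a)\in(0,\pi)$), one checks with principal branches that
$$W(y)\left(\frac{1-\al}{a+iy}+\frac{\al}{iy-b}\right)=(1-\al)\zeta^{\al}+\al\,\zeta^{\al-1}=\zeta^{\al-1}\bigl((1-\al)\zeta+\al\bigr).$$
Writing $\zeta=\rho e^{i\psi}$ with $\rho>0$, $\psi\in(0,\pi)$, a short computation gives
$$\R\bigl\{\zeta^{\al-1}((1-\al)\zeta+\al)\bigr\}=\rho^{\al-1}\bigl((1-\al)\rho\cos(\al\psi)+\al\cos((1-\al)\psi)\bigr).$$
I would then record the geometry of $\zeta$ through $\tta_1:=\arctan(y/a)$ and $\tta_2:=\arctan(y/b)$, both in $(0,\pi/2)$: here $\psi=\pi-\tta_1-\tta_2$ and $\rho=\tfrac{b}{a}\cdot\tfrac{\cos\tta_1}{\cos\tta_2}$. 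Substituting and using the hypothesis $\al a=(1-\al)b$ (equivalently $(1-\al)\tfrac{b}{a}=\al$), and clearing the positive factors, the claim becomes the trigonometric inequality
$$\cos\tta_1\cos(\al\psi)+\cos\tta_2\cos((1-\al)\psi)>0,\qquad \psi=\pi-\tta_1-\tta_2.$$
This is the single place where the relation $\al a=(1-\al)b$ is used.

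To prove this inequality I would apply product-to-sum to both terms. The key observation is that $(\tta_1+\al\psi)+(\tta_2+(1-\al)\psi)=\tta_1+\tta_2+\psi=\pi$, so the two ``sum'' cosines cancel: $\cos(\tta_1+\al\psi)+\cos(\tta_2+(1-\al)\psi)=0$. Hence the left-hand side equals $\tfrac12\bigl(\cos(\tta_1-\al\psi)+\cos(\tta_2-(1-\al)\psi)\bigr)$, and a sum-to-product step rewrites this as
$$\sin(\tta_1+\tta_2)\,\cos\!\left(\frac{\tta_1-\tta_2+(1-2\al)\psi}{2}\right).$$
The first factor is positive because $\tta_1+\tta_2\in(0,\pi)$; the second is positive once one verifies $|\tta_1-\tta_2+(1-2\al)\psi|<\pi$, which follows from an elementary case split on the sign of $1-2\al$ using $\tta_1,\tta_2\in(0,\pi/2)$ and $0<\psi<\pi$ (e.g.\ if $1-2\al\ge0$ then $\tta_1-\tta_2+(1-2\al)\psi\le\tta_1-\tta_2+\psi=\pi-2\tta_2<\pi$, and $\ge\tta_1-\tta_2>-\pi$; the other case is symmetric). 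This gives $\I W'(y)>0$, so $\I G(iy)$ is strictly increasing. For the ``in particular'' statement, since $G$ extends continuously to $0$ with $G(0)$ the tip of the slit, for $y>0$ we get $\I G(iy)>\lim_{y'\downarrow0}\I G(iy')=\I G(0)$.

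The main obstacle is bookkeeping rather than conceptual: keeping the branch choices consistent so that the polar-form identities for $\zeta^{\al}$ and $\zeta^{\al-1}$ are valid, and carrying out the case analysis bounding $|\tta_1-\tta_2+(1-2\al)\psi|$ by $\pi$ correctly. An alternative, perhaps slicker, route would be to realize $G$ as $z\mapsto g_s^{-1}(z+\xi(s))$ for the driving function $\xi(t)=c\sqrt{t}$ and differentiate the associated Loewner flow in $y$, but the self-contained computation above seems the most transparent.
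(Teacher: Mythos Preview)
Your argument is correct. The logarithmic-derivative computation, the reduction via $\zeta=(iy-b)/(a+iy)$, and the trigonometric identity
\[
\cos\tta_1\cos(\al\psi)+\cos\tta_2\cos((1-\al)\psi)=\sin(\tta_1+\tta_2)\,\cos\!\left(\frac{\tta_1-\tta_2+(1-2\al)\psi}{2}\right)
\]
all check out, and the case split bounding $|\tta_1-\tta_2+(1-2\al)\psi|<\pi$ is valid since $1-2\al\in(-1,1)$ and $\tta_1,\tta_2\in(0,\pi/2)$. The branch bookkeeping is fine because $a+iy$ and $iy-b$ both lie in $\mb{H}$, so $\arg(a+iy)\in(0,\pi/2)$ and $\arg(iy-b)\in(\pi/2,\pi)$, giving $\arg\zeta\in(0,\pi)$ and legitimizing the identities $(a+iy)^{-\al}(iy-b)^{\al}=\zeta^{\al}$ and $(a+iy)^{1-\al}(iy-b)^{\al-1}=\zeta^{\al-1}$ with principal branches.

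As for comparison with the paper: the paper explicitly states this lemma \emph{without proof} (``We mention without proof a geometric property of $G^n_k$\ldots''), so there is no argument there to compare against. Your direct computation therefore supplies what the paper omits. The alternative you mention at the end---realizing $G$ through the Loewner flow for $\xi(t)=c\sqrt{t}$ and differentiating---would also work and is closer in spirit to how such monotonicity facts are sometimes derived, but the self-contained trigonometric route you chose is cleaner for this particular map.
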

	The dashed line of Figure \ref{figa} illustrates this proposition.
%
%
%
%
\subsection{ Main results}
We shall consider driving functions which have the same regularity as Brownian motion. A \emph{subpower function} $\phi$ is a non decreasing function from $[0,\infty)$ to $[0,\infty)$ satisfying:
$$\lim_{x\to\infty} x^{-\nu}\phi(x)=0 \mbox{ for all } \nu>0.$$
If $\phi_1,\phi_2$ are subpower functions then so are $c\phi_1$, $\phi_1^c$ and $\max(\phi_1,\phi_2)$ for every $c>0$.

The function $\lbda$ is called \emph{weakly H\"older-1/2} if there exists a subpower function $\vphi$ such that
\begin{equation}
\label{wk}
\mbox{osc}(\lbda;\dt):=\sup\{|\lbda(t)-\lbda(s)|:s,t\in [0,1], |t-s|\leq \dt\}\leq \sqrt{\dt}\vphi(1/\dt)\mbox{~for all~~} \dt>0.
\end{equation}
It follows from P.Levy's theorem that the sample paths of Brownian motion are almost surely weakly H\"older-1/2 with subpower function $c\sqrt{\log(\dt)}$, $c>\sqrt{2}$, see \cite[Theorem I.2.7]{RY99}.

It is known that if $\lbda$ is weakly H\"older-1/2 and if there exist $c_0>0$, $y_0>0$ and $0<\be<1$ so that
\begin{equation}
\label{condition}
|\hat{f}'_t(iy)|\leq  c_0y^{-\be}\mbox{ for all }  0<y\leq y_0,\,\, t\in [0,1],
\end{equation}

where $\hat{f}_t(\cdot)=g_t^{-1}(\lbda(t)+\cdot)$, then $(g_t)_{0\leq t\leq 1}$ is generated by a curve, see \cite[section 3]{JL}. This is one of the main ideas to show the existence of SLE curves for $\ka\neq 8$ (\cite{RS}). We note that the Loewner chain of SLE$_8$ does not satisfy (\ref{condition}).

Our main theorem shows that under these hypotheses the algorithm gives the sup-norm convergence of the simulation curves.

\begin{theorem}
\label{theo:deterministic}
Suppose $\lbda$ is a weakly H\"older-1/2 driving function with a subpower function $\vphi$ and suppose the condition (\ref{condition}) is satisfied. Then the curve $\g$ generated from the Loewner equation can be approximated by the algorithm; that is, there exists a subpower function $\widetilde{\vphi}$ such that for all $n\geq \frac{1}{y_0^2}$ and $t\in [0,1]$,
\begin{equation}
\label{differ}
|\g^n(t)-\g(t)|\leq \frac{\widetilde{\vphi}(n)} {n^{\frac{1}{2}(1-\bexp)}},
\end{equation}
where $\g^n$ is the curve generated from the algorithm which is explained in section \ref{sec:algorithm}. The function $\widetilde{\vphi}$ depends on $\vphi, c_0$ and $\beta$.
\end{theorem}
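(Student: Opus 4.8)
The plan is to compare $\g^n$ and $\g$ at the partition points $t_k=k/n$ and then propagate the estimate across each subinterval. Fix $t\in[t_k,t_{k+1})$. Using $\g(t_k)=\hat f_{t_k}(0)$, the explicit description (\ref{form of g^n}) of the algorithm, and $\g^n_{t_k}(0)=0$, the triangle inequality gives
\[
|\g^n(t)-\g(t)|\;\le\;\underbrace{|\g^n(t)-\g^n(t_k)|}_{\text{oscillation of }\g^n}\;+\;\underbrace{|\hat f^n_{t_k}(0)-\hat f_{t_k}(0)|}_{\text{perturbation at }t_k}\;+\;\underbrace{|\g(t_k)-\g(t)|}_{\text{oscillation of }\g},
\]
since $\hat f^n_{t_k}(0)=\g^n(t_k)$ and $\hat f_{t_k}(0)=\g(t_k)$. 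So two kinds of estimates are needed: moduli of continuity for the two curves on a window of length $1/n$, and a bound on how far the inverse Loewner map moves when $\lbda$ is replaced by its square-root interpolant $\lbda^n$. Because $\lbda^n(t_k)=\lbda(t_k)$, the middle term equals $|h^n_{t_k}(\lbda(t_k))-h_{t_k}(\lbda(t_k))|$, a boundary value of the difference of the two upward Loewner flows driven by $\xi(s)=\lbda(t_k-s)$ and $\xi^n(s)=\lbda^n(t_k-s)$.

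For the middle (and dominant) term I would use a Gronwall-type perturbation estimate in the spirit of \cite{J}, \cite{JRW}. Writing $u_s=h_s(z)-h^n_s(z)$ with $z=\lbda(t_k)+iy$, the upward equation gives $|\partial_s u_s|\le\big(2|u_s|+2\|\lbda-\lbda^n\|_\infty\big)/(\I h_s(z)\,\I h^n_s(z))$, and since imaginary parts are nondecreasing along the upward flow, integrating yields a bound for $|h^n_{t_k}(z)-h_{t_k}(z)|$ in terms of $\|\lbda-\lbda^n\|_\infty\le 2\sqrt{1/n}\,\vphi(n)$ and a negative power of the free height $y$ (one genuinely cannot send $y\to 0$, as the Gronwall constant blows up polynomially in $1/y$). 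Letting $z\to\lbda(t_k)$ along the vertical segment, one controls $|h_{t_k}(z)-\g(t_k)|=|\hat f_{t_k}(iy)-\hat f_{t_k}(0)|\lesssim\int_0^y s^{-\be}\,ds\asymp y^{1-\be}$ from (\ref{condition}), and similarly for $\hat f^n_{t_k}$; summing and optimizing over $y$ balances a positive power of $y$ against $\|\lbda-\lbda^n\|_\infty$ times a negative power of $y$, which is what produces the exponent $\tfrac12(1-\bexp)$ together with a new subpower factor. A prerequisite — and the step I expect to be the main obstacle — is that $\hat f^n_t$ obeys a bound of the same shape as (\ref{condition}) uniformly in $n$: since $\hat f^n_{t_k}=G^n_{k-1}\circ\cdots\circ G^n_0$ is a composition of the explicit power maps $G^n_j$, one must estimate this telescoping product near the real axis, using weak H\"older-1/2 of $\lbda$ to control the exponents $\al_j$ and the geometry of $G^n_j$ (Lemma \ref{theo:G}) to locate the slit tips. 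I would try to route this through a stability lemma — if $\|\lbda-\tilde\lbda\|_\infty$ is small relative to $y_0$ and $\lbda$ satisfies (\ref{condition}), then $\tilde\lbda$ does too with adjusted constants — valid once $n$ exceeds a threshold $N_0$ depending on $c_0,\be,y_0,\vphi$.

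For the oscillation terms, the bound on $|\g(t_k)-\g(t)|$ follows from (\ref{condition}): writing $\g(t_k+v)=\hat f_{t_k}(\g_{t_k}(v))$, the point $\g_{t_k}(v)$ lies within distance $\lesssim\sqrt v\,\vphi(1/v)$ of $0$ (a standard Loewner-hull estimate for weakly H\"older-1/2 driving functions), and integrating the derivative bound (\ref{condition}) — extended from the imaginary axis to a neighborhood of $0$ by Koebe distortion and weak H\"older-1/2 — along a path from $0$ to $\g_{t_k}(v)$ gives $|\g(t_k+v)-\g(t_k)|\lesssim(\sqrt v\,\vphi(1/v))^{1-\be}$. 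For $v\le 1/n$ this is $\lesssim n^{-(1-\be)/2}$ up to a subpower factor; since $\bexp\ge\be$ for $\be\in[0,1]$, it is dominated by the rate from the middle term and contributes nothing new. The oscillation of $\g^n$ is handled the same way once the uniform analogue of (\ref{condition}) is available, using in addition that by construction the $k$-th sub-arc is $\hat f^n_{t_k}(\g^n_{t_k}([0,1/n]))$ with $\g^n_{t_k}([0,1/n])$ an explicit segment contained in a half-disk of radius $\asymp\sqrt{1/n}$. Assembling, the three bounds combine to $|\g^n(t)-\g(t)|\le\widetilde\vphi(n)\,n^{-\frac12(1-\bexp)}$ for $n\ge N_0$, with $\widetilde\vphi$ a subpower function in $\vphi,c_0,\be$ obtained by taking products and maxima of the subpower factors generated above; the hypothesis $n\ge 1/y_0^2$ ensures the scales $\sqrt{1/n}$ at which (\ref{condition}) is invoked lie in $(0,y_0]$, and for the finitely many admissible $n<N_0$ the curves stay in a fixed compact set, so enlarging $\widetilde\vphi$ on that range makes (\ref{differ}) hold for all $n\ge 1/y_0^2$.
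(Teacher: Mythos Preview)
Your decomposition anchors at the boundary tips $\gamma(t_k)=\hat f_{t_k}(0)$ and $\gamma^n(t_k)=\hat f^n_{t_k}(0)$, and this forces you to control both $|\hat f^n_{t_k}(iy)-\hat f^n_{t_k}(0)|$ (to pass from the Gronwall estimate at height $y$ down to the tip) and the oscillation $|\gamma^n(t)-\gamma^n(t_k)|$. Both of these require an analogue of (\ref{condition}) for the approximate maps $\hat f^n_t$, uniformly in $n$. You correctly flag this as the main obstacle, but the ``stability lemma'' you propose---that (\ref{condition}) persists under small sup-norm perturbations of the driving function---is not a known result and there is no evident mechanism for it: boundary derivative growth of the inverse Loewner map is exactly the kind of quantity that is \emph{not} stable under such perturbations in general. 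Deriving the bound from the telescope $\hat f^n_{t_k}=G^n_{k-1}\circ\cdots\circ G^n_0$ is no easier; each factor is explicit, but composing $n$ of them and tracking derivative growth near the real line is essentially proving (\ref{condition}) for a new driving function from scratch. Note that the trivial bound $|(\hat f^n_{t_k})'(iy)|\lesssim 1/y$ is not integrable at $0$, so it cannot serve as a substitute in either place.

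The paper's proof avoids this requirement entirely by a different splitting. Instead of comparing at $0$, it writes
\[
|\gamma(t_k+s)-\gamma^n(t_k+r)|=|\hat f_{t_k}(z)-\hat f^n_{t_k}(w)|\le|\hat f_{t_k}(z)-\hat f_{t_k}(w)|+|\hat f_{t_k}(w)-\hat f^n_{t_k}(w)|
\]
with $z=\gamma_{t_k}(s)$ and $w=\gamma^n_{t_k}(r)$ genuine \emph{interior} points of $\mb{H}$. Two short lemmas (one via the hcap--diameter--height inequality, one via the slit geometry of $G^n_j$, Lemma~\ref{theo:G}) show that one can choose $s\in[0,2/n]$ and take any $r\in[1/n,2/n]$ so that both $z$ and $w$ lie in a box $A_{n,c,\phi}$ with imaginary part $\asymp 1/\sqrt n$ up to subpower factors. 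The first difference then uses only (\ref{condition}) for $\hat f_{t_k}$ together with the hyperbolic distortion estimate of Lemma~\ref{theo:Pom}. The second difference is Lemma~\ref{theo:JRW} applied at $w$, using (\ref{condition}) for $f_{t_k}$ but only the \emph{trivial} bound $|(f^n_{t_k})'(u)|\lesssim 1/y$ for $f^n_{t_k}$; the geometric mean of the two logarithms in that lemma is precisely what produces the exponent $\sqrt{(1+\beta)/2}$. No derivative information about $\hat f^n$ beyond this trivial bound is ever needed. Finally, the uniform continuity (\ref{uni cont}) of $\gamma$ alone---not of $\gamma^n$---transfers the estimate from the special $s$ to the whole subinterval, so a modulus of continuity for $\gamma^n$ is never required either.
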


A related question is the following: under what additional assumptions, does the uniform convergence of driving functions imply convergence of corresponding curves? {\it A priori} the convergence of curves occurs in the sense of Carath\'eodory convergence, see \cite{L}, and in the sense of Cauchy transform of probability measures, see \cite{Bauer} for definitions and details. As said in the introduction, these types of convergence do not directly involve to the curves. In \cite[Section 7]{BJK}, it is shown that if two driving functions generating simple curves are close in the sup-norm and if one function has the condition (\ref{condition}) then the two generated curves are close in Hausdorff distance. One really wants to see two curves are close in the sup-norm. Lind, Marshall and Rohde \cite{LMR} show that if the driving functions have H\"older-1/2 norm less than 4, then the curves converge uniformly. However, the Brownian motion is a.s. not H\"older-1/2. In \cite{SS}, the authors study sufficient conditions to have uniform convergence of bidirectional paths (the curves and their time-reversals). The paper by Johansson-Viklund \cite{J} uses the tip structure modulus to  get another criterion for uniform convergence of curves.

In the rest of this paper, $C$ stands for absolute constant and $\phi$ for general subpower functions; $c$ and $\vphi$ stand for constants and subpower functions that may depend on the assumptions of Theorem \ref{theo:deterministic}. They can change line by line and are indexed when necessary to avoid confusion.

Since we are interested in the same type of driving functions as those in \cite[section 3]{JL}, there are several results from their paper we will use and state here for the convenience of the reader. 
\begin{lemma}\label{theo:diam_hull}
\cite[Lemma 3.4]{JL} Let $K$ be a hull. There exists a constant $C<\infty$ such that 
$$\mbox{hcap}(K)\leq C\mbox{diam}(K) \mbox{height}(K).$$
\end{lemma}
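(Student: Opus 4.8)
\noindent\emph{Proof idea.}\
The plan is to use the Brownian representation of half‑plane capacity together with an explicit enclosing half‑disk. I would begin by recalling that for a hull $K$ with mapping‑out function $g_K\colon\mb{H}\setminus K\to\mb{H}$ the function $z\mapsto\I z-\I g_K(z)$ is bounded and harmonic on $\mb{H}\setminus K$, with boundary values $\I z$ on $\partial K$ and $0$ on $\mb{R}$; hence
\begin{equation*}
\I z-\I g_K(z)=\mb{E}^{z}\big[\,\I B_\tau\,\big],\qquad z\in\mb{H}\setminus K,
\end{equation*}
where $B$ is a planar Brownian motion and $\tau$ its exit time from $\mb{H}\setminus K$. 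Letting $z=iy$ with $y\to\infty$ and using the expansion $g_K(z)=z+\mbox{hcap}(K)/z+O(|z|^{-2})$ yields $\mbox{hcap}(K)=\lim_{y\to\infty}y\,\mb{E}^{iy}[\,\I B_\tau\,]$.

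Next I would fix the geometry. Write $d=\mbox{diam}(K)$ and $h=\mbox{height}(K)$. Because $\mb{H}\setminus K$ is simply connected, no connected component of $K$ can lie entirely in $\mb{H}$ (otherwise a loop in $\mb{H}\setminus K$ separating that component from $\infty$ would be non‑contractible); applying this to the component of $K$ through a highest point gives $h\le d$. Since also the horizontal extent of $K$ is at most $d$, after a real translation (which preserves $\mbox{hcap}$, $\mbox{diam}$ and $\mbox{height}$) we may assume $K\subset\{z:|\R z|\le d/2,\ 0\le\I z\le h\}$, and therefore $K\subset\overline{D}(0,r)\cap\overline{\mb{H}}$ with $r=\sqrt{d^2/4+h^2}\le\sqrt5\,d/2$.

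Then I would combine two estimates. First, $B_\tau$ lies on $\mb{R}$ or on $\partial K$, so $\I B_\tau\le h$ in every outcome, whence $\mb{E}^{iy}[\,\I B_\tau\,]\le h\,\pr^{iy}[\,B\text{ hits }K\text{ before }\mb{R}\,]$. Second, since $K\subset\overline{D}(0,r)$, this probability is at most the probability that $B$ exits $\mb{H}\setminus\overline{D}(0,r)$ through the circular arc, and the latter can be computed explicitly: $\psi(z)=z+r^2/z$ maps $\mb{H}\setminus\overline{D}(0,r)$ conformally onto $\mb{H}$, fixing $\infty$ and sending the arc onto $[-2r,2r]$, so by conformal invariance of harmonic measure the probability equals the harmonic measure of $[-2r,2r]$ in $\mb{H}$ seen from $\psi(iy)=i(y-r^2/y)$, that is $\tfrac{2}{\pi}\arctan\!\frac{2r}{\,y-r^2/y\,}\le Cr/y$ for all large $y$. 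Multiplying by $y$, letting $y\to\infty$, and using $r\le\sqrt5\,d/2$ gives $\mbox{hcap}(K)\le C\,h\,r\le C'\,\mbox{diam}(K)\,\mbox{height}(K)$ with $C'$ absolute.

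This route has no genuinely hard step. The two points that require care are: (i) the harmonic‑function identity, where one should either regularize $K$ by smooth hulls so that $g_K$ extends continuously to $\partial K$, or simply quote the standard Beurling‑type formula for $\mbox{hcap}$; and (ii) the topological fact $\mbox{height}(K)\le\mbox{diam}(K)$, which is what forces the enclosing half‑disk to have radius comparable to $\mbox{diam}(K)$ — without it one would only obtain the weaker bound $\mbox{hcap}(K)\lesssim\mbox{diam}(K)^2$.
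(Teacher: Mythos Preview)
Your argument is correct: the Brownian representation $\mbox{hcap}(K)=\lim_{y\to\infty}y\,\mb{E}^{iy}[\I B_\tau]$, the pointwise bound $\I B_\tau\le h$, and the explicit harmonic‑measure estimate via the map $z\mapsto z+r^2/z$ combine exactly as you describe to give $\mbox{hcap}(K)\le C\,h\,r\le C'\,\mbox{diam}(K)\,\mbox{height}(K)$. The topological observation that every component of a hull meets $\mb{R}$, hence $h\le d$, is the right way to control the radius of the enclosing half‑disk.

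As for comparison: the paper does not give its own proof of this lemma at all; it simply quotes the statement from \cite[Lemma~3.4]{JL}. So there is no in‑paper argument to weigh against yours. Your approach is in fact the standard one and is essentially what underlies the cited result, so nothing is lost and nothing needs to be changed.
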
 

\begin{lemma}
\label{theo:oscillation}
\cite[Lemma 3.1]{C} Suppose $\g$ is the curve generated by a driving function $\lbda(t)$ in (\ref{downward}). Then for all $z\in \g([0,t])$,
$$ |\mbox{Re } z|\leq \sup_{0\leq s\leq r\leq t} |\lbda(r)-\lbda(s)|$$
and
$$\mbox{Im }z \leq 2\sqrt{t}.$$
\end{lemma}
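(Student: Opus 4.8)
The plan is to fix $s\in[0,t]$, reduce the whole statement to bounding $\g(s)$, and run the reversed (upward) Loewner flow from the tip. Since shrinking $t$ to any $s\le t$ only decreases both right-hand sides, it is enough to prove $|\R\g(s)|\le\sup_{0\le p\le r\le t}|\lbda(r)-\lbda(p)|$ and $\I\g(s)\le 2\sqrt t$ for a fixed $s$. So fix $s$, put $\xi(u):=\lbda(s-u)$ for $u\in[0,s]$, and let $(h_u)_{0\le u\le s}$ solve \eqref{upward} with driving function $\xi$; as recalled above $h_s=g_s^{-1}$, so
$$\g(s)=\lim_{\e\to 0^+}g_s^{-1}(\lbda(s)+i\e)=\lim_{\e\to 0^+}h_s(\xi(0)+i\e),$$
the limit existing precisely because $(g_u)$ is generated by $\g$. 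For fixed $\e>0$ I would track $W_u:=h_u(\xi(0)+i\e)$, which solves $\partial_u W_u=-2/(W_u-\xi(u))$, $W_0=\xi(0)+i\e$. From $\partial_u\I W_u=2\I W_u/|W_u-\xi(u)|^2\ge 0$ one gets $\I W_u\ge\e$, hence $|W_u-\xi(u)|\ge\e$, so the solution exists on all of $[0,s]$.

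The height bound is a one-line computation: $\partial_u(\I W_u)^2=4(\I W_u)^2/|W_u-\xi(u)|^2\le 4$, the inequality because $|W_u-\xi(u)|\ge\I W_u$ (as $\xi(u)\in\mb R$); integrating gives $(\I W_s)^2\le\e^2+4s$, and $\e\to 0^+$ yields $\I\g(s)\le 2\sqrt s\le 2\sqrt t$. For the real part, $\partial_u\R W_u=-2(\R W_u-\xi(u))/|W_u-\xi(u)|^2$, so $\R W_u$ is always pushed toward $\xi(u)$; I would prove $\min_{[0,u]}\xi\le\R W_u\le\max_{[0,u]}\xi$ by a first-exit argument. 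For the upper bound: if $\R W_{u_1}>\max_{[0,u_1]}\xi$ for some $u_1$, let $u_0$ be the last time before $u_1$ at which $\R W_u-\max_{[0,u]}\xi\le 0$ (this quantity is $0$ at $u=0$, so $u_0$ exists and is $<u_1$). On $(u_0,u_1]$ one has $\R W_u>\max_{[0,u]}\xi\ge\xi(u)$, hence $\partial_u\R W_u<0$ there, while $u\mapsto\max_{[0,u]}\xi$ is nondecreasing; so $u\mapsto\R W_u-\max_{[0,u]}\xi$ is strictly decreasing on $[u_0,u_1]$, contradicting that it is $0$ at $u_0$ and positive at $u_1$. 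The lower bound is symmetric. Letting $\e\to 0^+$ then gives $\R\g(s)\in[\min_{[0,t]}\lbda,\max_{[0,t]}\lbda]$; since the curve is normalized so that $\lbda(0)=0$ (translate otherwise), $0$ lies in this interval and $|\R\g(s)|\le\max_{[0,t]}\lbda-\min_{[0,t]}\lbda=\sup_{0\le p\le r\le t}|\lbda(r)-\lbda(p)|$.

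The only step requiring real care is the real-part claim: because $\xi$ is merely continuous one cannot write an ODE for $\R W_u-\xi(u)$ nor differentiate $\max_{[0,u]}\xi$, and one must control the instants at which $\R W_u$ touches $\xi(u)$; the first-exit argument above sidesteps both difficulties. Everything else — the two differential identities, the bound $|W_u-\xi(u)|\ge\I W_u$, the harmless non-blow-up of interior points under \eqref{upward}, and passing $\e\to 0^+$ using that $(g_u)$ is generated by $\g$ — is routine.
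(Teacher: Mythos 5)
Your argument is correct. Note that the paper does not prove this lemma at all --- it is imported verbatim from \cite{C} --- so the relevant comparison is with the classical ``hull in a box'' proof, which runs the \emph{forward} (downward) flow: one shows that a real point $x>\max_{[0,t]}\lbda$ keeps $g_u(x)-\lbda(u)>0$ and that a point with $\I z>2\sqrt t$ keeps $(\I g_u(z))^2\geq (\I z)^2-4u>0$, so neither is swallowed by time $t$, whence $K_t\subset[\min_{[0,t]}\lbda,\max_{[0,t]}\lbda]\times[0,2\sqrt t]$ and a fortiori the trace lies in this box. You instead run the upward flow from $\xi(0)+i\e$ and bound $W_u=h_u(\xi(0)+i\e)$ directly, using that the trace is the radial limit $\lim_{\e\to 0^+}g_s^{-1}(\lbda(s)+i\e)$; this is a genuinely different (reverse-flow) route, slightly more direct for the curve itself, and all the steps check out: $\partial_u\I W_u\geq 0$ gives non-blow-up, $|W_u-\xi(u)|\geq \I W_u$ gives $(\I W_s)^2\leq\e^2+4s$, and your first-exit argument is sound because $F(u)=\R W_u-\max_{[0,u]}\xi$ is continuous, equals $0$ at the last nonpositive time $u_0$, and is strictly decreasing on $[u_0,u_1]$ since $\partial_u\R W_u<0$ there while the running maximum is nondecreasing. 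One caveat is worth recording: the conclusion $|\R z|\leq\mathrm{osc}(\lbda;[0,t])$ as stated requires $\lbda(0)=0$ (your proof actually yields the sharper statement $\R\g(s)\in[\min_{[0,t]}\lbda,\max_{[0,t]}\lbda]$), which is harmless here because the paper only invokes the lemma for shifted driving functions such as $\lbda(t_k+\cdot)-\lbda(t_k)$; your ``translate otherwise'' remark is thus a comment on the lemma's implicit normalization rather than a gap in the proof.
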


\begin{lemma}\cite[Proposition 3.8]{JL} Let $(g_t)$ be the Loewner chain corresponding to $\lbda(t)$ satisfying (\ref{wk}) and (\ref{condition}). Then there exists a subpower function $\vphi_1$ such that if $0\leq t\leq t+s\leq 1$  and $s\in [0,y^2]$
$$|\g(t+s)-\g(t)|\leq \vphi_1(1/y)[v(t+s,y)+v(t,y)],$$
where $$v(t,y)=\int^y_0|\hat{f}'_t(ir)|dr\leq \frac{c_0}{1-\be}y^{1-\be},\,\,\,\, 0<y<y_0,$$
and that 
	\begin{equation}\label{uni cont}
		|\g(t+s)-\g(t)|\leq \vphi_1(1/y)\frac{2}	{1-\be}y^{1-\be}\mbox{ for } 0\leq s\leq y^2\leq y^2_0.
	\end{equation}
\end{lemma}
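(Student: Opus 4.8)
The plan is to compare $\g(t)$ and $\g(t+s)$ by routing through two interior points, $\hat f_t(i\rho)$ and $\hat f_{t+s}(iy)$, where $\rho=\rho(y)$ is a \emph{Whitney scale} comparable to $y$ up to a subpower factor. Since $(g_t)$ is generated by a curve, $\hat f_t$ extends continuously to $0$ with $\g(t)=\hat f_t(0)$, and likewise $\g(t+s)=\hat f_{t+s}(0)$. I will take $\rho:=y\bigl(3+\vphi(1/y^2)\bigr)$, so that $\rho/y$ is a subpower function of $1/y$, and write
$$\g(t+s)-\g(t)=\bigl(\g(t+s)-\hat f_{t+s}(iy)\bigr)+\bigl(\hat f_{t+s}(iy)-\hat f_t(i\rho)\bigr)+\bigl(\hat f_t(i\rho)-\g(t)\bigr).$$
Integrating $\hat f'_{t+s}$ along $[0,iy]$ bounds the first term by $v(t+s,y)$, and integrating $\hat f'_t$ along $[0,i\rho]$ bounds the third by $v(t,\rho)\le\frac{c_0}{1-\be}\rho^{1-\be}$ via (\ref{condition}); this uses $\rho\le y_0$, which I may assume, the complementary range of $y$ bounded away from $0$ being harmless since both sides of the asserted inequality are then comparable to constants and $\vphi_1$ can be enlarged.

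For the middle term I will use the composition rule $g_{t+s}=\tilde g_s\circ g_t$ with $\tilde g_s$ driven by $\lbda(t+\cdot)$: its inverse $\tilde g_s^{-1}$ is the upward flow $h_s$ driven by $r\mapsto\lbda(t+s-r)$, so $\hat f_{t+s}(iy)=\hat f_t(w)$ with $w:=h_s(\lbda(t+s)+iy)-\lbda(t)$. Elementary Loewner estimates then locate $w$: from $\partial_r(\I h_r)^2=4\bigl(\I h_r/|h_r-\xi_r|\bigr)^2\in[0,4]$ and $s\le y^2$ one gets $y\le\I w\le\sqrt5\,y$; and from $|\partial_r\R h_r|\le 2/\I h_r\le 2/y$ together with $|\lbda(t+s)-\lbda(t)|\le\mathrm{osc}(\lbda;s)\le\mathrm{osc}(\lbda;y^2)\le y\,\vphi(1/y^2)$ (monotonicity of the oscillation and (\ref{wk})) one gets $|\R w|\le 2y+y\vphi(1/y^2)\le\rho$. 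Hence $w$ and $i\rho$ both lie in the box $B:=\{\zeta:\ |\R\zeta|\le\rho,\ y\le\I\zeta\le\rho\}$.

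It remains to estimate $|\hat f_t(w)-\hat f_t(i\rho)|$. I will integrate $\hat f'_t$ along the polygonal path $i\rho\to\R w+i\rho\to w$, which stays in $B$ and has length at most $2\rho$. Each $\zeta\in B$ has hyperbolic distance $O\bigl(\log(\rho/\I\zeta)\bigr)=O\bigl(\log(\rho/y)\bigr)$ from $i\rho$, so the Koebe distortion theorem gives $|\hat f'_t(\zeta)|\le C\,(\rho/y)^{C}\,|\hat f'_t(i\rho)|\le C\,(\rho/y)^{C}\,c_0\,\rho^{-\be}\le C\,c_0\,(\rho/y)^{C}\,y^{-\be}$ on the path; integrating, the middle term is at most $C\,c_0\,y^{1-\be}(\rho/y)^{C+1}$. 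Since $\rho/y$ is subpower in $1/y$, and sums, products, powers and maxima of subpower functions of $1/y$ remain subpower, combining the three bounds — and using $v(t+s,y),v(t,\rho)\le\frac{c_0}{1-\be}(\cdot)^{1-\be}$ — yields both the stated estimate and (\ref{uni cont}), with a single subpower $\vphi_1$ absorbing all the factors.

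The step I expect to be the crux is the transfer of hypothesis (\ref{condition}), which controls $\hat f'_t$ only on the imaginary axis, to a bound on the box $B$ whose distortion factor is itself a subpower function of $1/y$. This works precisely because $B$ has aspect ratio $\rho/y=3+\vphi(1/y^2)$, so the hyperbolic distances involved are only $O(\log(\rho/y))$ and the distortion loss is a fixed power of $\rho/y$. A related point — and the reason for routing through $\hat f_{t+s}(iy)$ rather than using the one-step identity $\g(t+s)-\g(t)=\hat f_t\bigl(g_t(\g(t+s))-\lbda(t)\bigr)-\hat f_t(0)$ — is that the point $g_t(\g(t+s))-\lbda(t)$ may approach $\partial\mb H$ as $s\to0$, which would ruin the distortion estimate; the detour instead produces an interior point $w$ with the uniform lower bound $\I w\ge y$, and this is exactly what is needed on the vertical leg of the path.
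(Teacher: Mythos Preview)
The paper does not give its own proof of this lemma; it is quoted from \cite[Proposition 3.8]{JL}. Your argument is correct and follows the same route as the original: decompose via the two interior points $\hat f_{t+s}(iy)$ and $\hat f_t(i\rho)$, use the upward Loewner flow to write $\hat f_{t+s}(iy)=\hat f_t(w)$ and locate $w$ in a Whitney box of aspect ratio $\rho/y$ subpower in $1/y$, then apply Koebe distortion to carry the hypothesis on $i\mb R^+$ across the box.

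One small refinement is worth recording. As written, your middle term is bounded by a subpower multiple of $y^{1-\be}$, which gives (\ref{uni cont}) directly but not the sharper bound $\vphi_1(1/y)[v(t+s,y)+v(t,y)]$. To get the latter, replace the use of (\ref{condition}) at $i\rho$ by the purely distortion-theoretic inequality $y\,|\hat f'_t(iy)|\le C\,v(t,y)$ (which follows from $|\hat f'_t(ir)|\ge c\,(r/y)^{C}|\hat f'_t(iy)|$ for $0<r<y$ and integration); then your box estimate yields $|\hat f_t(w)-\hat f_t(i\rho)|\le C(\rho/y)^{C'}v(t,y)$, and the third term satisfies $v(t,\rho)\le C(\rho/y)^{C'}v(t,y)$ by the same device.
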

Most of the time we will deal with the behavior of conformal maps near the real and imaginary lines. For every subpower function $\phi$, constant $c>0$ and $n\in \mb{N}$, define
	$$A_{n,c,\phi}=\{x+iy\in \mb{H}: |x|\leq \frac{\phi(n)}{\sqrt n} \mbox{ and } \frac{1}{\sqrt{n}\phi(n)}\leq y\leq \frac{c}{\sqrt n}\}.$$

\begin{lemma}\label{theo:dilate}
	There exist constants $\al>0$ and $c'>0$ such that if $z_1$ and $z_2$ are inside the box $A_{n,c,\phi}$, 
	and $f$ is a conformal map on $\mb{H}$ then
		\begin{equation}
		\label{distortion of f}
		|f'(z_1)|\leq c' \phi(n)^\al |f'(i\I z_1)|
		\end{equation}
	and
		$$d_{\mb{H},hyp}(z_1,z_2)\leq c'\log \phi(n)+c'.$$
	The constants $\al$ and $c'$ depend only on $c$, not on $\phi$ or $n$. The notation $d_{H,hyp}(z_1,z_2)$ means the hyperbolic distance between $z_1$ and $z_2$ in a simply connected domain $H$.
\end{lemma}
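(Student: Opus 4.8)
The plan is to prove Lemma \ref{theo:dilate} by reducing everything to the standard distortion and hyperbolic-geometry estimates for conformal maps of $\mb{H}$, after first controlling the Euclidean geometry of the box $A_{n,c,\phi}$.

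\textbf{Step 1: Hyperbolic-distance bound.} I would first estimate the hyperbolic metric on $\mb{H}$, namely $ds = |dz|/\I z$. For a point $z_1 = x_1 + iy_1 \in A_{n,c,\phi}$, connect it to $i\I z_1 = iy_1$ by the horizontal segment at height $y_1$; its hyperbolic length is $|x_1|/y_1 \leq \frac{\phi(n)/\sqrt n}{1/(\sqrt n \phi(n))} = \phi(n)^2$. That already shows $d_{\mb{H},hyp}(z_1, i\I z_1) \leq \phi(n)^2$, but this is too weak; instead I would go vertically up to height $c/\sqrt n$ first, then horizontally, then vertically back down. Going from height $y_1 \geq \frac{1}{\sqrt n \phi(n)}$ up to $c/\sqrt n$ costs $\log\frac{c/\sqrt n}{y_1} \leq \log(c\phi(n))$; the horizontal move at height $c/\sqrt n$ costs $\frac{|x_1|}{c/\sqrt n} \leq \frac{\phi(n)/\sqrt n}{c/\sqrt n} = \phi(n)/c$ — still not logarithmic. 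The fix is to note that $|f'|$ comparisons only need a \emph{bounded} hyperbolic distance between $z_1$ and $z_2$ together with a comparison of $z_1$ to $i\I z_1$; so I would prove the two assertions separately and with different tools. For $d_{\mb{H},hyp}(z_1,z_2)$: both points have real part of size at most $\phi(n)/\sqrt n$ and imaginary part between $\frac{1}{\sqrt n \phi(n)}$ and $\frac{c}{\sqrt n}$. Rescaling by $\sqrt n$ (a hyperbolic isometry of $\mb{H}$) maps $A_{n,c,\phi}$ onto $\{|x| \leq \phi(n), \ \frac{1}{\phi(n)} \leq y \leq c\}$. Two such points are joined by: vertical up to height $c$ (cost $\leq \log(c\phi(n))$), horizontal at height $c$ (cost $\leq 2\phi(n)/c$ — bad again). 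So horizontal moves at the \emph{top} are bad; one must instead move along a path that stays high only where it is narrow. The correct route: go up to height comparable to $|x_1 - x_2| \vee |x_2|$...

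Let me restate the plan more carefully. \textbf{Revised approach.} After rescaling by $\sqrt n$, work in the region $R = \{x+iy : |x|\le \phi(n),\ 1/\phi(n)\le y\le c\}$. To bound the hyperbolic distance in $\mb{H}$ between two points of $R$, use the quasi-hyperbolic-type path: from $z_1=x_1+iy_1$, move vertically to the point $x_1 + i|x_1|$ (if $|x_1|>y_1$; cost $\log(|x_1|/y_1)\le \log \phi(n)^2$), then follow an arc of the circle $|z|=$const or simply move horizontally at height $\asymp |x_1|+|x_2|$ from $x_1$ to $x_2$ (cost $\le (|x_1|+|x_2|)/(|x_1|\vee|x_2|)\le 2$ when both are comparable; otherwise break at the origin's neighborhood), and finally vertically down to $z_2$. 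When one of the $x_i$ is small compared to the other, route through a point at height $\asymp$ the larger $|x_i|$ over the origin. Each vertical leg costs at most $\log(\phi(n)^2)$ and each horizontal/arc leg costs $O(1)$, giving $d_{\mb{H},hyp}(z_1,z_2)\le C\log\phi(n)+C$. I would write this as: the diameter of $R$ in the hyperbolic metric of $\mb{H}$ is $O(\log\phi(n)+1)$, which is the cleanest formulation, and the constant depends only on $c$. This is the step I expect to require the most care, precisely because naive paths through the "top" of the box are too expensive; one must always ascend only to height comparable to the horizontal displacement.

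\textbf{Step 2: Derivative bound via distortion.} For \eqref{distortion of f}, apply the Koebe distortion theorem. Since $f$ is conformal (univalent) on $\mb{H}$, for any two points $w_1, w_2 \in \mb{H}$ one has the standard consequence of the distortion theorem that $\log|f'(w_1)/f'(w_2)|$ is controlled by $d_{\mb{H},hyp}(w_1,w_2)$: precisely $\big|\log|f'(w_1)| - \log|f'(w_2)|\big| \le C\, d_{\mb{H},hyp}(w_1,w_2)$ (this is because $\log f'$ is a holomorphic function whose derivative $f''/f'$ has $|f''/f'|\le$ const$/\I w$ by Koebe applied on the largest disc in $\mb{H}$ around $w$, so $\log f'$ is Lipschitz with respect to the hyperbolic metric). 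Now apply this with $w_1 = z_1$ and $w_2 = i\I z_1 = iy_1$. By Step 1 — or rather, by the horizontal-segment estimate, since $z_1$ and $iy_1$ have the same height $y_1 \ge \frac{1}{\sqrt n \phi(n)}$ and $|x_1| \le \frac{\phi(n)}{\sqrt n}$, the hyperbolic distance between them is exactly $|x_1|/y_1 \le \phi(n)^2$ — we would get only $|f'(z_1)| \le e^{C\phi(n)^2}|f'(iy_1)|$, which is far too weak. So for \eqref{distortion of f} I must \emph{not} use the straight horizontal segment; instead route $z_1 \to iy_1$ through a high point: from $z_1=x_1+iy_1$ go up to $x_1 + i(c/\sqrt n)$ (cost $\le \log(c\phi(n))$ since $y_1\ge \frac{1}{\sqrt n\phi(n)}$), horizontally to $i(c/\sqrt n)$ at height $c/\sqrt n$ (cost $\le \frac{\phi(n)/\sqrt n}{c/\sqrt n}=\phi(n)/c$) — still linear in $\phi(n)$, not logarithmic. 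Hmm: so to get the $\phi(n)^\al$ bound (polynomial, hence weaker than exponential-in-$\log$, i.e. we are \emph{allowed} $e^{C\log\phi(n)}=\phi(n)^C$) we need $d_{\mb{H},hyp}(z_1, i\I z_1)\le C\log\phi(n)+C$. The horizontal leg must therefore be taken at a height comparable to $|x_1|$, not $c/\sqrt n$: go from $z_1$ straight up to $x_1 + i(|x_1|\vee y_1)$ and from $iy_1$ straight up to $i(|x_1|\vee y_1)$ (each leg costs $\le\log\frac{|x_1|\vee y_1}{y_1}\le\log(\phi(n)^2)$ using $|x_1|\le\phi(n)/\sqrt n$ and $y_1\ge 1/(\sqrt n\phi(n))$), then the horizontal leg at height $|x_1|$ has hyperbolic length $|x_1|/|x_1|=1$. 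Thus $d_{\mb{H},hyp}(z_1,i\I z_1)\le 2\log\phi(n)^2 + 1 \le C\log\phi(n)+C$, so $|f'(z_1)|\le e^{C\log\phi(n)+C}|f'(iy_1)| = c'\phi(n)^\al|f'(i\I z_1)|$ with $\al = C$, $c'=e^C$ depending only on $c$. Finally Step 1's diameter bound follows from the same routing applied between arbitrary $z_1,z_2\in A_{n,c,\phi}$: $d_{\mb{H},hyp}(z_1,z_2)\le d_{\mb{H},hyp}(z_1,i\I z_1)+d_{\mb{H},hyp}(i\I z_1, i\I z_2)+d_{\mb{H},hyp}(i\I z_2,z_2)$, and the middle term is $|\log(\I z_1/\I z_2)|\le\log\phi(n)^2$ since both heights lie in $[\frac{1}{\sqrt n\phi(n)},\frac{c}{\sqrt n}]$. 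The main obstacle throughout is purely bookkeeping of which routes through $\mb{H}$ keep the hyperbolic length logarithmic in $\phi(n)$; the conformal-analysis input is just Koebe distortion packaged as "$\log|f'|$ is hyperbolic-Lipschitz," which is entirely standard.
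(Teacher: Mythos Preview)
Your argument is correct: the two ingredients are exactly (i) that any two points of $A_{n,c,\phi}$ are at hyperbolic distance $O(\log\phi(n))$ in $\mb{H}$, obtained by routing each point to the imaginary axis through height comparable to its real part, and (ii) that $\log|f'|$ is Lipschitz for the hyperbolic metric on $\mb{H}$ by Koebe distortion. The paper itself gives no details at all---its entire proof is the sentence ``The proof is similar to \cite[Lemma 3.2]{JL}''---and the argument you have written is precisely the standard one underlying that reference, so there is no genuine difference in approach.

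One small presentational remark: your write-up spends most of its length on false starts (horizontal segments at the bottom or top of the box) before arriving at the correct route; for a final version you should just state directly that from $z_1=x_1+iy_1$ one goes vertically to height $\max(|x_1|,y_1)$, horizontally to the imaginary axis, and back down, giving $d_{\mb{H},hyp}(z_1,i\I z_1)\le 2\log\bigl(\max(|x_1|,y_1)/y_1\bigr)+1\le 4\log\phi(n)+1$, and then combine with the vertical estimate $|\log(\I z_1/\I z_2)|\le\log(c\phi(n))$ along $i\mb{R}^+$.
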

\begin{proof}
The proof is similar to \cite[Lemma 3.2]{JL}.
\end{proof}
\begin{lemma}\label{theo:Pom}
\cite[Corollary 1.5]{Pommerenke} (half plane version) If $f$ is a conformal map of $\mb{H}$ into $\mb{C}$ and if $z_0,z_1\in\mb{H}$ then
$$
	 |f(z_1)-f(z_2)|\leq 2\, |(\I z_1)f'(z_1)|\exp(4d_{\mb{H},hyp}(z_1,z_2)).
$$
\end{lemma}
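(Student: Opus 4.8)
The plan is to derive this from its unit-disk counterpart \cite[Corollary 1.5]{Pommerenke} by transporting everything through a Cayley transform; I also indicate the direct argument standing behind the disk statement, which makes the constants transparent. Fix a M\"obius map $\Phi:\mb{D}\to\mb{H}$ and set $F=f\circ\Phi:\mb{D}\to\mb{C}$, again a conformal map; for $z_1,z_2\in\mb{H}$ let $\zeta_j=\Phi^{-1}(z_j)$. Then $F(\zeta_1)-F(\zeta_2)=f(z_1)-f(z_2)$; the hyperbolic distance is a conformal invariant, so $d_{\mb{D},hyp}(\zeta_1,\zeta_2)=d_{\mb{H},hyp}(z_1,z_2)$; and a short computation with the Poincar\'e metric gives $(1-|\zeta_1|^2)\,|\Phi'(\zeta_1)|=2\,\I z_1$, hence
\[
 (1-|\zeta_1|^2)\,|F'(\zeta_1)|=(1-|\zeta_1|^2)\,|\Phi'(\zeta_1)|\,|f'(z_1)|=2\,(\I z_1)\,|f'(z_1)| .
\]
Substituting these three identities into the disk inequality --- which bounds $|F(\zeta_1)-F(\zeta_2)|$ by a constant multiple of $(1-|\zeta_1|^2)|F'(\zeta_1)|$ times an exponential of $d_{\mb{D},hyp}(\zeta_1,\zeta_2)$ --- yields exactly the asserted form, the factor $2$ coming from the Jacobian identity above and the exponent $4$ being inherited from \cite[Corollary 1.5]{Pommerenke}.

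For completeness I would also reproduce, directly in $\mb{H}$, the argument behind that disk corollary. Let $\s:[0,L]\to\mb{H}$ be the hyperbolic geodesic from $z_1$ to $z_2$, parametrized by hyperbolic arclength, so that $L=d_{\mb{H},hyp}(z_1,z_2)$ and, in the normalization $ds_{hyp}=|dw|/\I w$, the Euclidean speed is $|\s'(u)|=\I\s(u)$. Then
\[
 |f(z_1)-f(z_2)|\;\leq\;\int_0^L|f'(\s(u))|\,|\s'(u)|\,du\;=\;\int_0^L(\I\s(u))\,|f'(\s(u))|\,du .
\]
To control the integrand, pre-compose $f$ with a hyperbolic isometry of $\mb{H}$ carrying $z_1$ to a base point and transfer the problem to $\mb{D}$: the Koebe distortion theorem then gives $(\I w)\,|f'(w)|\leq(\I z_1)\,|f'(z_1)|\,e^{2\,d_{\mb{H},hyp}(z_1,w)}$ for all $w\in\mb{H}$. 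Since $d_{\mb{H},hyp}(z_1,\s(u))=u$ on the geodesic, the integral is at most $(\I z_1)|f'(z_1)|\int_0^Le^{2u}\,du\leq\tfrac12(\I z_1)|f'(z_1)|e^{2L}$, which gives the asserted bound with room to spare.

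The one point requiring genuine care is bookkeeping of constants and normalizations: which curvature convention is used for $d_{\mb{H},hyp}$ in \cite{Pommerenke} (rescaling the metric by $\tfrac12$ turns the factor $e^{2L}$ above into $e^{4\,d_{\mb{H},hyp}}$, the stated form), and keeping track of the conformal factor $|\Phi'|$ when transporting the Koebe distortion estimate between $\mb{D}$ and $\mb{H}$, so that the \emph{Euclidean} derivative $|f'|$ --- not the hyperbolic one --- enters the final bound with the correct power and coefficient. Everything else is routine: existence and conformal invariance of the geodesic, non-vanishing of $f'$ since $f$ is injective, and integrability of $|f'|$ along $\s$.
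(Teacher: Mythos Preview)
The paper gives no proof of this lemma: it is simply quoted as the ``half plane version'' of \cite[Corollary~1.5]{Pommerenke}, with no argument supplied. Your derivation is therefore more than the paper offers, and it is correct. The Cayley-transform transport is the natural route, and the three identities you record --- $F(\zeta_1)-F(\zeta_2)=f(z_1)-f(z_2)$, conformal invariance of hyperbolic distance, and $(1-|\zeta_1|^2)|\Phi'(\zeta_1)|=2\,\I z_1$ --- are exactly what is needed to convert Pommerenke's disk statement into the half-plane one. Your second, self-contained argument (integrating $(\I w)|f'(w)|$ along the hyperbolic geodesic and bounding it pointwise via Koebe distortion) is also sound and in fact yields a sharper constant; your remark that the exponent $4$ versus $2$ is purely a matter of the curvature normalization chosen for $d_{\mb H,hyp}$ is the right diagnosis. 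There is no gap here.
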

%
%
%
%
%
%

%
%
%
%

%
%
%
%
%
%
%
%

%
%
%
%

%
%
%
%
%
%
%
\section{Proof of main theorem \ref{theo:deterministic} }\label{sec:proof}
\subsection{ Heuristic argument}
	Let $\g_t$ be the image of $\g$ under $g_t-\lbda(t)$, i.e.
$$\g_t(s)=g_t(\g(t+s))-\lbda(t)\mbox { and } \, \g(t+s)=\hat{f}_t(\g_t(s)).$$
	We want to estimate 
	\begin{equation}
	\label{heuristic ineq}
	|\g(t_{k+1})-\g^n(t_{k+1})|=|\hat{f}_{t_k}(z)-\hat{f}^n_{t_k}(w)|
\leq |\hat{f}_{t_k}(z)-\hat{f}_{t_k}(w)|+|\hat{f}_{t_k}(w)-\hat{f}^n_{t_k}(w)|
	\end{equation}
where $z=\g_{t_k}(\frac{1}{n})$ and $w=\g^n_{t_k}(\frac{1}{n})$.

First, $z$ and $w$ are the tips of two curves generated respectively by two driving functions defined on $[0,\frac{1}{n}]$. It follows from Lemma \ref{theo:oscillation} that $\I z$ and $\I w \leq \frac{2}{\sqrt n}$.

For the first term in the RHS of (\ref{heuristic ineq}), it follows from Lemma \ref{theo:Pom} that
$$
	 |\hat{f}_{t_k}(z)-\hat{f}_{t_k}(w)|\leq 2\, |(\I z)\hat{f}'_{t_k}(z)|\exp(4d_{\mb{H},hyp}(z,w)).
$$
Notice that $z$ and $w$ trivially have positive imaginary parts. 

If we can show that $z$ and $w$ are in the same box $A_{n,c,\phi}$ then combining with a hypothesis of $\hat{f}'_{t_k}$ on $i\mb{R}^+$, we obtain similar inequalities for $|\hat{f}'_{t_k}(z)|$ and $|\hat{f}'_{t_k}(w)|$ from Lemma \ref{theo:dilate}. Then it follows that 
$$2\,\I z|\hat{f}'_{t_k}(z)|\exp(4d_{\mb{H},hyp}(z,w))\lesssim (\I z)^{1-\beta}\phi(n)\lesssim \frac{\phi(n)}{(\sqrt{n})^{1-\be}}\to 0 \mbox{ as } n\to \infty,$$
where the notation $f\lesssim g$ means that $f\leq Cg$ for some constant $C>0$.

Since $w$ is a tip of a straight line generated by a nice driving function, we can show $w$ is in a box $A_{n,c,\phi}$, see (\ref{eq:tip in box}). However, $z=\g_{t_k}(\frac{1}{n})$, in the case of SLE, has continuous density on the strip $\{x+iy: x\in \mb{R},0\leq y\leq 2\}$. So there might not exist a controllable-sized box $A_{n,c,\phi}$ that contains $z$. However Lemma \ref{theo:point s} shows the existence of a point in $\g_{t_k}([0,\frac{1}{n}]) \cap A_{n,c,\phi}$, and we will use this point instead of $\g_{t_k}(\frac{1}{n})$. Then  we use the uniform continuity of $\g$ to get back to $(\ref{heuristic ineq})$.

For the second term in the RHS of (\ref{heuristic ineq}), notice that
$$\hat{f}_{t_k}(w)-\hat{f}^n_{t_k}(w)=f_{t_k}(w+\lbda(t_k))-f^n_{t_k} (w+\lbda(t_k)).$$
This expression is a perturbation of two solutions of the upward Loewner equation (\ref{downward}) with two driving functions $t\mapsto \lbda(t_k-\cdot)$ and $t\mapsto\lbda^n(t_k-\cdot).$ 

We will use the following lemma from \cite{JRW} and the fact that $\lbda$ and $\lbda^n$ are close on $[0,t_k]$ and that $|{f}_{t_k}'(w+\lbda(t_k))|$ is well-controlled. 
\begin{lemma}
\label{theo:JRW}
\cite[Lemma 2.3]{JRW} 
Let $0<T<\infty$. Suppose that for $t\in[0,T]$, $f^{(1)}_t$ and $f^{(2)}_t$ satisfy the upward Loewner equation (\ref{upward}) with $W^{(1)}_t$ and $W^{(2)}_t$, respectively, as driving terms. Suppose that
$$\e=\sup_{s\in[0,T]}|W^{(1)}_s-W^{(2)}_s|.$$
Then if $u=x+iy\in \mb{H}$, then
$$|f_T^{(1)}(u)-f_T^{(2)}(u)|\leq \e\exp\left\{\frac{1}{2}\left[\log\frac{I_{T,y}|(f^{(1)}_T)'(u)|}{y}  \log\frac{I_{T,y}|(f^{(2)}_T)'(u)|}{y}\right]^{1/2}+\log\log\frac{I_{T,y}}{y}\right\},$$
where $I_{T,y}=\sqrt{4T+y^2}$.
\end{lemma}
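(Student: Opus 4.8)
\emph{Proof plan.} I would turn the estimate into a linear complex ODE for the difference of the two flows and then control its growth factor, keeping track of the \emph{argument}, not merely the modulus, of the integrated right-hand side. Set $a_t:=f^{(1)}_t(u)-W^{(1)}_t$, $b_t:=f^{(2)}_t(u)-W^{(2)}_t$, $X_t:=f^{(1)}_t(u)-f^{(2)}_t(u)$, and $\delta_t:=W^{(1)}_t-W^{(2)}_t$, so $|\delta_t|\le\e$. Subtracting the two copies of (\ref{upward}) and using $-\tfrac2a+\tfrac2b=\tfrac{2(a-b)}{ab}$ together with $a_t-b_t=X_t-\delta_t$ gives the linear equation
$$\partial_t X_t=\frac{2}{a_tb_t}\,X_t-\frac{2\,\delta_t}{a_tb_t},\qquad X_0=0 .$$
Variation of parameters yields $X_T=-\int_0^T\frac{2\,\delta_s}{a_sb_s}\,e^{\Psi(s)}\,ds$ with $\Psi(s):=\int_s^T\frac{2\,dr}{a_rb_r}$, hence
$$|X_T|\le\e\int_0^T\frac{2}{|a_s|\,|b_s|}\;e^{\,\R\Psi(s)}\,ds .$$
The point to keep in sight is that it is $\R\Psi(s)$, not $\int_s^T\frac{2\,dr}{|a_r||b_r|}$, that governs the bound: replacing it by the modulus gives only the Gr\"onwall estimate $|X_T|\le\e\,(e^{\int_0^T 2\,dr/|a_r||b_r|}-1)$, which is hopelessly lossy precisely when $\I f^{(i)}_T(u)$ is of order $I_{T,y}$ while $|(f^{(i)}_T)'(u)|$ is tiny (a near-vertical slit), and it is the cancellation in $\R\Psi$ that will produce both the factor $\tfrac12$ in the exponent and the replacement of $\I f^{(i)}_T(u)$ by $I_{T,y}\,|(f^{(i)}_T)'(u)|$.

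For each $i$ I would use the standard Loewner estimates. From (\ref{upward}), $\partial_t\log\I f^{(i)}_t(u)=2/|a^{(i)}_t|^2$ and $\partial_t\log|(f^{(i)}_t)'(u)|=\R\big(2/(a^{(i)}_t)^2\big)$, so $\big|\partial_t\log|(f^{(i)}_t)'(u)|\big|\le\partial_t\log\I f^{(i)}_t(u)$; integrating from $f^{(i)}_0=\mathrm{id}$ gives $y/Y^{(i)}_t\le|(f^{(i)}_t)'(u)|\le Y^{(i)}_t/y$, where $Y^{(i)}_t:=\I f^{(i)}_t(u)$, while $\partial_t(Y^{(i)}_t)^2=4(Y^{(i)}_t)^2/|a^{(i)}_t|^2\le4$ gives $Y^{(i)}_t\le I_{t,y}\le I_{T,y}$ and $|a^{(i)}_t|\ge Y^{(i)}_t\ge y$. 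In particular $\int_0^T\frac{2\,dt}{|a^{(i)}_t|^2}=\log(Y^{(i)}_T/y)$, and Koebe's theorem applied to the conformal map $f^{(i)}_t\colon\mb{H}\to\mb{H}$ gives $Y^{(i)}_t\ge\tfrac14\,y\,|(f^{(i)}_t)'(u)|$; this comparison of $Y^{(i)}_t$ with $y\,|(f^{(i)}_t)'(u)|$ is what lets $I_{T,y}\,|(f^{(i)}_T)'(u)|/y$ enter the final bound, at the price of an additive $O(\log\log(I_{T,y}/y))$.

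The crux, and also the main obstacle, is to estimate $\int_0^T\frac{2}{|a_s||b_s|}e^{\R\Psi(s)}\,ds$: a bare Gr\"onwall/Cauchy--Schwarz bound $|X_T|\le\e\,e^{\sqrt{\log(Y^{(1)}_T/y)\log(Y^{(2)}_T/y)}}$ falls short by a full factor $2$ in the exponent and never sees $|(f^{(i)}_T)'(u)|$ at all. My plan for the sharp estimate is twofold. First, since $|a^{(1)}_r-a^{(2)}_r|=|X_r-\delta_r|$ and $X_r$ can be controlled a posteriori by a continuity/bootstrap argument, one has $\tfrac{2}{a_rb_r}=\tfrac12\big(\tfrac{2}{(a^{(1)}_r)^2}+\tfrac{2}{(a^{(2)}_r)^2}\big)$ up to a small error, so that after integration $e^{\R\Psi(s)}$ is comparable to $\big(|(f^{(1)}_T)'(u)|\,|(f^{(2)}_T)'(u)|\big/|(f^{(1)}_s)'(u)|\,|(f^{(2)}_s)'(u)|\big)^{1/2}$ — the exponent $\tfrac12$ here is the source of the $\tfrac12$ in the statement, and the underlying mechanism is the conservation of $a^{(i)}_t(f^{(i)}_t)'(u)$ when $W^{(i)}$ is constant. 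Second, one decomposes $[0,T]$ at the $O(\log(I_{T,y}/y))$ successive times where $\min_i Y^{(i)}_t$ doubles, applies a blockwise Cauchy--Schwarz using $|a^{(i)}_s|\ge Y^{(i)}_s$ and $\int\frac{2\,ds}{|a^{(i)}_s|^2}=\log(Y^{(i)}_T/y)$, and sums the geometric (in the block index) contributions; this upgrades the naive bound $\sqrt{\log\frac{Y^{(1)}_T}{y}\log\frac{Y^{(2)}_T}{y}}$ to $\tfrac12\sqrt{\log\frac{I_{T,y}|(f^{(1)}_T)'(u)|}{y}\log\frac{I_{T,y}|(f^{(2)}_T)'(u)|}{y}}$, the number of blocks producing the $\log\log(I_{T,y}/y)$ term. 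Making the phase cancellation in $\R\Psi$ quantitative and uniform across the dyadic scales, and closing the $O(\e)$ error bookkeeping by a continuity argument in $T$ (needed because $|X_r|$ is only a posteriori small), is the technically delicate heart of the argument; everything else is the routine ODE and conformal-distortion input above.
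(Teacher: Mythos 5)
The paper does not actually prove this lemma: it is quoted verbatim from \cite[Lemma 2.3]{JRW}, so the relevant comparison is with the argument there. Your skeleton matches its starting point (the linear ODE for $X_t$, variation of parameters, and the correct insistence that it is $\mathrm{Re}\,\Psi(s)$ and not $\int_s^T \frac{2\,dr}{|a_r||b_r|}$ that must be used), and your monotonicity facts ($y/Y_t\le |f_t'|\le Y_t/y$, $Y_t\le I_{t,y}$, $\int_0^T \frac{2\,dt}{|a_t|^2}=\log (Y_T/y)$) are correct. But the heart of your plan does not close, and you say so yourself. Mechanism (1), replacing $\frac{2}{a_rb_r}$ by $\frac12\bigl(\frac{2}{(a_r^{(1)})^2}+\frac{2}{(a_r^{(2)})^2}\bigr)$ up to a bootstrap-controlled error, needs $|a_r^{(1)}-a_r^{(2)}|=|X_r-\delta_r|$ to be small compared with $|a_r^{(1)}||a_r^{(2)}|$; the error in the exponent is of order $\int_0^T \frac{|X_r-\delta_r|^2}{|a_r|^2|b_r|^2}\,dr\lesssim \frac{(\e e^{E})^2}{y^2}\log\frac{I_{T,y}}{y}$, so the continuity-in-$T$ argument only works when $\e e^{E}\ll y$. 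The lemma carries no such hypothesis, and in the very regime this paper applies it one has $\e\asymp \vphi(n)/\sqrt n$ while $y\asymp 1/(\vphi(n)\sqrt n)$, so $\e\gg y$ and the bootstrap premise fails. Moreover, even where it applies, mechanism (1) produces the arithmetic-mean exponent $\frac12(\log Q_1+\log Q_2)$ (with $Q_j=I_{T,y}|(f^{(j)}_T)'(u)|/y$), which does not imply the geometric-mean bound $\frac12\sqrt{\log Q_1\log Q_2}$ of the statement (take $\log Q_2=0$), and your blockwise dyadic Cauchy--Schwarz is not shown to bridge that; this is precisely the ``technically delicate heart'' you leave open.

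The actual proof needs neither a bootstrap nor a decomposition of $[0,T]$. The missing identity is $\frac{4(\mathrm{Re}\,a_t)^2}{|a_t|^4}=\partial_t\log\bigl(Y_t\,|f_t'(u)|\bigr)$, obtained by adding your two derivative formulas. Since $\mathrm{Re}\frac{2}{a_rb_r}=\frac{2(\mathrm{Re}\,a_r\,\mathrm{Re}\,b_r-Y^{(1)}_rY^{(2)}_r)}{|a_r|^2|b_r|^2}\le \frac{2|\mathrm{Re}\,a_r||\mathrm{Re}\,b_r|}{|a_r|^2|b_r|^2}\le \frac{\sigma}{4}\,\partial_r\log\bigl(Y^{(1)}_r|(f^{(1)}_r)'|\bigr)+\frac{1}{4\sigma}\,\partial_r\log\bigl(Y^{(2)}_r|(f^{(2)}_r)'|\bigr)$ for every $\sigma>0$ (no comparison of $a_r$ with $b_r$ at all), integrating over $[s,T]$, using $Y^{(j)}_s|(f^{(j)}_s)'(u)|\ge y$ and $Y^{(j)}_T\le I_{T,y}$, and optimizing $\sigma$ gives $\mathrm{Re}\,\Psi(s)\le \frac12\bigl[\log Q_1\log Q_2\bigr]^{1/2}$ uniformly in $s$; this is where the factor $\frac12$ and the quantity $I_{T,y}|(f^{(j)}_T)'(u)|/y$ really come from. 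The $\log\log$ term is then simply the prefactor $\int_0^T\frac{2\,ds}{|a_s||b_s|}\le \int_0^T\bigl(\frac{1}{|a_s|^2}+\frac{1}{|b_s|^2}\bigr)ds=\frac12\log\frac{Y^{(1)}_TY^{(2)}_T}{y^2}\le\log\frac{I_{T,y}}{y}$, not a count of dyadic blocks. So your setup is sound, but the decisive step of the proof is absent and the two substitutes you propose would not deliver the stated inequality.
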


Thus if $|(f^{(1)}_T)'(u)|\leq cy^{-\be}$, then 
$$|f_T^{(1)}(u)-f_T^{(2)}(u)|\lesssim \e y^{-\sqrt{(1+\be)/2}}.$$
If one can show furthermore
$$\e\leq \frac{\phi(n)}{\sqrt n}\mbox{~~ and ~~~} y=\I u= \I w \geq \frac{1}{\phi(n)\sqrt n}$$
then
$$ |f_T^{(1)}(u)-f_T^{(2)}(u)|\lesssim  \frac{\phi(n)^{c''}}{n^{\frac{1}{2}(1-\bexp)}}\to 0.$$
	From here, we only have an estimate for $|\g(t)-\g^n(t)|$ when $t=t_k$. To have an estimate on the whole interval we notice that 
$$\g^n([t_{k+1},t_{k+2}])=\hat{f}^n_{t_k}(\g^n_{t_k}[\frac{1}{n},\frac{2}{n}])=G^n_k(\g^n_{t_{k+1}}[0,\frac{1}{n}]).$$

It follows from a property of $G^n_k$ (Lemma \ref{theo:G}), that every point in $\g^n_{t_k}([\frac{1}{n},\frac{2}{n}])$ is in a box $A_{n,c,\phi}$ and hence we  can apply the same argument for (\ref{heuristic ineq}) with $\g^n(t_{k+1})$ being replaced by any $\g^n(t)$, $t_{k+1}\leq t\leq t_{k+2}$.

	Now we will go into the details of the proof.
\subsection{Proof of Theorem \ref{theo:deterministic}}
\label{sec:body of proof}
	Fix an arbitrary interval $I=[t_k,t_{k+2}]$, $0\leq k\leq n-2$. Denote $\g_k=\g_{t_k}$, $\g^n_k=\g^n_{t_k}$.

	We will estimate $|\g(s+t_k)-\g^n(r+t_k)|$ for all $r\in [\frac{1}{n},\frac{2}{n}]$ and with a specific $s$ chosen later. Combining with the uniform continuity of $\g$, we will have an estimate for 
$$|\g(r+t_k)-\g^n(r+t_k)|\,\,\,\mbox { with all } r\in[\frac{1}{n},\frac{2}{n}].$$

From now on, we will choose $n$ so that $\frac{1}{n}\leq y^2_0$. Denote $z=\g_k(s), w =\g^n_k(r)$. By the triangle inequality,
\begin{equation}\label{main ineq}
|\g(s+t_k)-\g^n(r+t_k)|=|\hat{f}_{t_k}(z)-\hat{f}^n_{t_k}(w)|
\leq |\hat{f}_{t_k}(z)-\hat{f}_{t_k}(w)|+|\hat{f}_{t_k}(w)-\hat{f}^n_{t_k}(w)|.
\end{equation}

{\bf The first term in the RHS of (\ref{main ineq}).} It follows from Lemma \ref{theo:Pom} that
\begin{equation}\label{first term}
	 |\hat{f}_{t_k}(z)-\hat{f}_{t_k}(w)|\leq (2\I z)|\hat{f}'_{t_k}(z)|\exp(4d_{\mb{H},hyp}(z,w)).
\end{equation}

	
The next lemma shows the existence of a point in $\g_k([0,\frac{2}{n}])\cap A_{n,c,\phi}$. 
\begin{lemma}\label{theo:point s}
	There exists a subpower function $\phi$ depending only on $\vphi$, $c_0$ and $\beta$ of Theorem \ref{theo:deterministic} such that for $n\geq 1$ and $0\leq k\leq n-1$, there exists $s\in [0,\frac{2}{n}]$ such that $\g_k(s)\in A_{n,2\sqrt{2},\phi}$.
\end{lemma}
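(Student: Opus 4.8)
The plan is to recognize $\g_k$ as a Loewner curve in its own right with a tightly controlled driving function, so that the bound $|\R\g_k(s)|\le \phi(n)/\sqrt n$ and the upper bound $\I\g_k(s)\le 2\sqrt2/\sqrt n$ become automatic for \emph{every} point of $\g_k([0,2/n])$ via Lemma~\ref{theo:oscillation}; the only real content is then to produce a point on $\g_k$ whose imaginary part is at least $1/(\sqrt n\,\phi(n))$, and for that I would combine the half-plane capacity identity with Lemma~\ref{theo:diam_hull}.

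First I would set $\tau_k=\min\{2/n,\,1-t_k\}\in[1/n,2/n]$, so that $[0,\tau_k]$ lies in the domain of $\g_k$ (this takes care of the exceptional value $k=n-1$). As in Section~\ref{sec:algorithm}, $\g_k(s)=g_{t_k}(\g(t_k+s))-\lbda(t_k)$ is the curve generated by the driving function $s\mapsto\lbda(t_k+s)-\lbda(t_k)$, and the associated hull $\widehat K_s$ satisfies $\mbox{hcap}(\widehat K_s)=2s$ (conjugating by the translation by the real constant $-\lbda(t_k)$ changes neither $\mbox{hcap}$ nor diameter nor height). Applying Lemma~\ref{theo:oscillation} to this driving function, $\widehat K_{\tau_k}$ lies in the box $\{x+iy:|x|\le\mbox{osc}(\lbda;\tau_k),\ 0\le y\le 2\sqrt{\tau_k}\}$; since $\tau_k\le 2/n$ and $\vphi$ is nondecreasing with $1/\tau_k\le n$, this gives $\mbox{osc}(\lbda;\tau_k)\le\sqrt{\tau_k}\,\vphi(1/\tau_k)\le\sqrt{2/n}\,\vphi(n)$ and $2\sqrt{\tau_k}\le 2\sqrt2/\sqrt n$, whence $\mbox{diam}(\widehat K_{\tau_k})\le 2\sqrt{\tau_k}\,(\vphi(n)+1)$, while $\mbox{height}(\widehat K_{\tau_k})=\max_{0\le s\le\tau_k}\I\g_k(s)$ (the top point of the hull is a boundary point with positive imaginary part, hence lies on the generating curve).

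Then Lemma~\ref{theo:diam_hull} gives
$$2\tau_k=\mbox{hcap}(\widehat K_{\tau_k})\le C\,\mbox{diam}(\widehat K_{\tau_k})\,\mbox{height}(\widehat K_{\tau_k})\le 2C\sqrt{\tau_k}\,(\vphi(n)+1)\,\mbox{height}(\widehat K_{\tau_k}),$$
so that $\mbox{height}(\widehat K_{\tau_k})\ge\sqrt{\tau_k}/(C(\vphi(n)+1))\ge 1/(C\sqrt n\,(\vphi(n)+1))$, using $\tau_k\ge 1/n$. Enlarging the absolute constant $C$ if necessary so that $C\ge 1$, I would set $\phi:=2C\max(\vphi,1)$, which is a subpower function depending only on $\vphi$, and note that $\phi(n)\ge C(\vphi(n)+1)\ge\sqrt2\,\vphi(n)$. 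Choosing $s\in[0,\tau_k]\subseteq[0,2/n]$ with $\I\g_k(s)=\mbox{height}(\widehat K_{\tau_k})$, one then has $\I\g_k(s)\ge 1/(C\sqrt n\,(\vphi(n)+1))\ge 1/(\sqrt n\,\phi(n))$, together with $\I\g_k(s)\le 2\sqrt{\tau_k}\le 2\sqrt2/\sqrt n$ and $|\R\g_k(s)|\le\sqrt{2/n}\,\vphi(n)\le\phi(n)/\sqrt n$; hence $\g_k(s)\in A_{n,2\sqrt2,\phi}$.

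The main obstacle is precisely the lower bound on the height: a priori it is not clear that $\g_k$ climbs to height $\gtrsim 1/(\sqrt n\,\vphi(n))$ within time $\tau_k\asymp 1/n$, and this is exactly where the capacity normalization $\mbox{hcap}(\widehat K_{\tau_k})=2\tau_k$ together with the geometric inequality of Lemma~\ref{theo:diam_hull} (capacity $\le$ diameter $\times$ height) is essential; the oscillation estimate of Lemma~\ref{theo:oscillation} does double duty, keeping the horizontal width, hence the diameter, of order $\vphi(n)/\sqrt n$ and supplying the upper bound $\I\le 2\sqrt2/\sqrt n$. A minor point to treat carefully is that $\mbox{height}(\widehat K_{\tau_k})$ is realized on the curve $\g_k$ itself and not merely inside a bounded component cut off by the hull, which holds since the highest point of $\widehat K_{\tau_k}$ is on $\partial\widehat K_{\tau_k}\subseteq\g_k([0,\tau_k])\cup\mb{R}$ and has positive imaginary part.
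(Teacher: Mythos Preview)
Your proof is correct and follows essentially the same route as the paper: identify $\g_k$ as the Loewner curve driven by $s\mapsto\lbda(t_k+s)-\lbda(t_k)$, use Lemma~\ref{theo:oscillation} to bound the real part and the height from above, then combine the half-plane capacity normalization with Lemma~\ref{theo:diam_hull} to force the height to be at least $1/(\sqrt n\,\phi(n))$, and finally pick a highest point. The only differences are cosmetic: you are more careful about the endpoint case $k=n-1$ via $\tau_k$, and you justify explicitly that the top of the hull lies on the curve rather than in a bounded complementary component.
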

\begin{proof}
	Since $\eta:=\g_k([0,\frac{2}{n}])$ is the curve generated by the Loewner equation (\ref{downward}) with driving function $\lbda(t_k+.)-\lbda(t_k)$ on $[0,\frac{2}{n}]$ and since $\lbda$ is weakly H\"older-1/2, it follows from Lemma \ref{theo:oscillation} that 
$$|\R \g_k(s)|\leq \sqrt{\frac{2}{n}}\vphi(\frac{n}{2})=:\frac{\vphi_2(n)}{\sqrt{n}}\,\,\, \mbox{ and }\,\,\,\, \I\g_k(s)\leq \frac{2\sqrt{2}}{\sqrt n}$$
for all $s\in [0,\frac{2}{n}]$.

	This implies
		$$\mbox{diam}(\eta)\leq   \mbox{height}(\eta)+\mbox{width}(\eta)  \leq \frac{2\sqrt{2}}{\sqrt n}+ 	\frac{2\vphi_2(n)}{\sqrt n}=\frac{\vphi_3(n)}{\sqrt{n}}.$$
It follows from Lemma \ref{theo:diam_hull} that 
		$$\frac{2}{n}=\mbox{hcap} (\eta) \leq C\mbox{ diam}(\eta)\mbox{ height} (\eta),$$
so, 
$$ \mbox{height} (\eta)\geq \frac{1}{\sqrt {n}\vphi_4(n)}.$$
The lemma follows by choosing a highest point and letting $\phi:=\vphi_5:=\max(\vphi_4,\vphi_2)$.
\end{proof}
With this specific point $\g_k(s)$, we can use the inequality (\ref{distortion of f}) in Lemma \ref{theo:dilate}. To have a bound for $\exp(4d_{\mb{H},hyp}(z,w))$ one needs to show that $w=\g^n_k(r)$ is also in $A_{n,2\sqrt{2},\vphi_5}$.
By the same argument in the above lemma, since $\g^n_k([0,\frac{1}{n}])$ and $\g^n_{k+1}([0,\frac{1}{n}])$ are line segments,
\begin{equation}
\label{eq:tip in box}
\g^n_k(\frac{1}{n})\mbox{ and } \g^n_{k+1}(\frac{1}{n})\in A_{n,2\sqrt{2},\vphi_5}.
\end{equation}
%
%
%
%

	\begin{lemma}
	\label{theo:w in box}
	There exists a subpower function $\phi$ depending only on $\vphi$, $c_0$ and $\be$ such that for all $n$, $k$ and $r\in [\frac{1}{n},\frac{2}{n}]$, $\g^n_k(r)$ is in the box $A_{n,2\sqrt{2},\phi}$.

	\end{lemma}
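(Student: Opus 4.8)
The plan is to check separately the three inequalities defining membership of $\g^n_k(r)$ in $A_{n,2\sqrt2,\phi}$. Two of them are routine. By the semigroup identity of Section \ref{sec:algorithm}, re-centered at $\lbda^n(t_k)$, the map $s\mapsto\g^n_k(s)$, $s\in[0,2/n]$, is the curve generated by the Loewner equation with driving function $\lbda^n(t_k+\cdot)-\lbda^n(t_k)$; hence Lemma \ref{theo:oscillation} applied at time $r\le 2/n$ gives $\I\g^n_k(r)\le 2\sqrt{2/n}=2\sqrt2/\sqrt n$ and $|\R\g^n_k(r)|\le\mbox{osc}(\lbda^n;2/n)$. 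Since $\lbda^n$ agrees with $\lbda$ on the grid and is monotone on each $[t_j,t_{j+1}]$ with $|\lbda^n(t_{j+1})-\lbda^n(t_j)|=|\lbda(t_{j+1})-\lbda(t_j)|$, splitting an interval of length $\le 2/n$ into at most three grid pieces and invoking (\ref{wk}) bounds $\mbox{osc}(\lbda^n;2/n)$ by $3\vphi(n)/\sqrt n$.

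The crux is the lower bound on $\I\g^n_k(r)$. Write $\g^n_k(r)=G^n_k(p_r)$ with $p_r:=\g^n_{k+1}(r-1/n)\in\g^n_{k+1}([0,1/n])$ (for $r=1/n$ the bound is exactly (\ref{eq:tip in box})). The naive idea of placing $p_r$ in $A_{n,2\sqrt2,\vphi_5}$ and using the distortion estimate of Lemma \ref{theo:dilate} fails, since for $r$ near $1/n$ the point $p_r$ is near $0$ and may have imaginary part below $1/(\sqrt n\,\vphi_5(n))$. Instead I use that $\g^n_{k+1}([0,1/n])$ is the straight segment leaving $0$ at angle $\tta:=\al_{k+1}\pi$, so $p_r=\rho\,e^{i\tta}$ with $\rho\in(0,\rho^*]$, $\rho^*=|\g^n_{k+1}(1/n)|$, and compare $p_r$ with the point $i\rho$ of equal modulus on the imaginary axis. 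The key quantitative fact is that $\al_{k+1}$ and $1-\al_{k+1}$ are both at least $4/(\vphi(n)+4)^2$: this follows from $\al_{k+1}=\frac12-\frac12\,x/\sqrt{16+x^2}$ with $|x|=\sqrt n\,|\lbda(t_{k+2})-\lbda(t_{k+1})|\le\vphi(n)$ (by (\ref{wk})) and the elementary inequality $\frac12-\frac12\,x/\sqrt{16+x^2}\ge 4/(|x|+4)^2$. Thus $\tta$ stays away from $0$ and $\pi$ by a subpower amount, and since the circle $\{|w|=\rho\}$ is a hyperbolic geodesic of $\mb H$ along which the arclength element is $d\varphi/\sin\varphi$ (with $w=\rho e^{i\varphi}$), one gets $d_{\mb H,hyp}(\rho e^{i\tta},i\rho)=|\log\tan(\tta/2)|\le 2\log(\vphi(n)+4)$, a bound independent of $\rho$.

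Finally, $G^n_k$ maps $\mb H$ conformally onto $\mb H\setminus\g^n_k([0,1/n])$ with $G^n_k(0)=\g^n_k(1/n)$ the tip of the slit; so by conformal invariance and domain monotonicity of the hyperbolic metric, $d_{\mb H,hyp}(G^n_k(p_r),G^n_k(i\rho))\le d_{\mb H,hyp}(p_r,i\rho)\le 2\log(\vphi(n)+4)$, whence $\I G^n_k(p_r)\ge\I G^n_k(i\rho)\,(\vphi(n)+4)^{-2}$. By Lemma \ref{theo:G} applied to $G^n_k$, $\I G^n_k(i\rho)\ge\I G^n_k(0)=\I\g^n_k(1/n)$, which is at least $1/(\sqrt n\,\vphi_5(n))$ by (\ref{eq:tip in box}); hence $\I\g^n_k(r)\ge 1/(\sqrt n\,\vphi_5(n)(\vphi(n)+4)^2)$. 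Taking $\phi:=\max\{3\vphi,\,\vphi_5(\vphi+4)^2\}$, a subpower function depending only on $\vphi,c_0,\be$, all three bounds give $\g^n_k(r)\in A_{n,2\sqrt2,\phi}$. The step I expect to be the main obstacle is the second paragraph: recognizing that Lemma \ref{theo:G} only controls the imaginary axis and that one must pass to the actual slit-image curve through a hyperbolic-distance comparison, whose uniform control in $\rho$ rests on keeping $\al_{k+1}(1-\al_{k+1})$ bounded below by an inverse-subpower quantity.
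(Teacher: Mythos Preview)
Your argument is correct and follows the same route as the paper: bound $|\R\g^n_k(r)|$ and $\I\g^n_k(r)$ from above via Lemma~\ref{theo:oscillation}, then for the lower bound write $\g^n_k(r)=G^n_k(p_r)$ with $p_r=\g^n_{k+1}(r-1/n)$, compare $p_r$ hyperbolically to a point on $i\mb R^+$, transport this bound through $G^n_k$ by conformal invariance and domain monotonicity, and finish with Lemma~\ref{theo:G} together with (\ref{eq:tip in box}).

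The only difference is the choice of comparison point and how the hyperbolic distance is controlled. The paper takes $p_r=x+iy$ and compares with $iy$; since $p_r$ lies on a ray through $0$, the ratio $|x|/y$ equals that of the tip $\g^n_{k+1}(1/n)\in A_{n,2\sqrt2,\vphi_5}$, so $d_{\mb H,hyp}(x+iy,iy)$ is bounded by the same quantity as in Lemma~\ref{theo:dilate} (using scale invariance of the hyperbolic metric). You instead compare $p_r=\rho e^{i\tta}$ with $i\rho$ along the geodesic semicircle $\{|w|=\rho\}$ and control $\tta$ directly through the explicit formula for $\al_{k+1}$ and the weak H\"older bound on $\lbda$. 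Your version is a bit more self-contained (it does not lean on Lemma~\ref{theo:dilate} and makes the dependence on $\vphi$ explicit), while the paper's is shorter once (\ref{eq:tip in box}) is in hand; the two are equivalent in substance.
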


	\begin{proof}
	Notice that $\g^n_k(r)$ is the tip of the Loewner curve generated by the driving function $t\mapsto \lbda^n(t+t_k)-\lbda^n(t_k)$, $t\in [0,r]$. It follows from Lemma \ref{theo:oscillation} that 
\begin{equation}
	|\R \g^n_k(r)|\leq \sup\{ |\lbda^n(t+t_k)-\lbda^n(t_k)|,t\in [0,r]\}\leq \sqrt{\frac{2}{n}}\vphi(\frac{n}{2})= \frac{\vphi_2(n)}{\sqrt n}.
\end{equation}
and
	$$\I \g^n_k(r)\leq 2\sqrt{r}\leq 2\sqrt{\frac{2}{n}}.$$

	Now the rest of the proof is to find a lower bound for $\I \g^n_k(r)$. Fix $r\in [\frac{1}{n},\frac{2}{n}]$. Let $x+iy:=\g^n_{k+1}(r-\frac{1}{n})$, $G:=(\hat{f}^n_{t_k})^{-1}\circ \hat{f}^n_{t_{k+1}}$.
	
	Since $\g^n_{k+1}([0,\frac{1}{n}])$ is a line segment with the tip $\g^n_{k+1}(1/n)$ in $A_{n,2\sqrt{2},\vphi_5}$, by Lemma \ref{theo:dilate},
	$$d_{\mb{H}, hyp}(x+iy, iy)\leq C\log \vphi_5(n)+C$$
	and
	$$ d_{\mb{H}, hyp}(x+iy,iy)=d_{\mb{H}\backslash \g^n_k[0,\frac{1}{n}], hyp}(\g^n_k(r),G(iy))\geq d_{\mb{H},hyp} (\g^n_k(r), G(iy)).$$

	It follows from (\ref{eq:tip in box}) and Lemma \ref{theo:G} that
	$$\I \g^n_k(r)\geq  \frac{\I G(iy)}{C\vphi_5(n)^C}\geq \frac{\I \g^n_k(1/n)}{C\vphi_5(n)^C}\geq \frac{1}{C\sqrt{n} \vphi_5(n)^{C+1}}=\frac{1}{\sqrt{n}\vphi_6(n)}.$$
	So $\g^n_k(r)$ and $\g_k(s)$ are both in $A_{n,2\sqrt{2},\vphi_7}$ with $\vphi_7=\max(\vphi_5,\vphi_6)$.
	\end{proof}

	We now apply Lemma \ref{theo:dilate} and obtain
\begin{eqnarray}
 |\hat{f}_{t_k}(z)-\hat{f}_{t_k}(w)|&\leq &(2\I z)|\hat{f}'_{t_k}(z)|\exp(4d_{\mb{H},hyp}(z,w))\notag\\
&\leq & C(\I z)\vphi_7(n)^{\al}|\hat{f}'_{t_k}(i\I z)|\exp (C\log \vphi_7(n)+C)\notag \\
&\leq &(\I z)^{1-\be}Cc_0\vphi_7(n)^{\al}\exp (C\log \vphi_7(n)+C)\notag\\
&\leq & (\frac{2\sqrt{2}}{\sqrt n})^{1-\be}Cc_0\vphi_7(n)^{\al}\exp (C\log \vphi_7(n)+C)= \frac{\vphi_8(n)}{{\sqrt n}^{1-\be}}.
\label{hyper}
\end{eqnarray}
%
%
%
%
%

%
%
%
%
%
%
{\bf The second term in the RHS of (\ref{main ineq}).}\\
	Let $u=x+iy:= w+\lbda(t_k)$. Since $\lbda(t_k)=\lbda^n(t_k)$,
	$$\hat{f}_{t_k}(w)-\hat{f}^n_{t_k}(w)=f_{t_k}(u)-f^n_{t_k}(u).$$
	Applying Lemma \ref{theo:JRW}, we get
	$$|f_{t_k}(u)-f^n_{t_k}(u)|\leq \e\exp\left\{\frac{1}{2}\left[\log\frac{I_{t_k,y}|f'_{t_k}(u)|}{y}  \log\frac{I_{t_k,y}|(f^n_{t_k})'(u)|}{y}\right]^{1/2}+\log\log\frac{I_{t_k,y}}{y}\right\},$$
where $I_{t_k,y}=\sqrt{4t_k+y^2}$ and $\e=\sup_{t\in [0,t_k]}|\lbda(t)-\lbda^n(t)|\leq \frac{2\vphi(n)}{\sqrt n}$.

Since $y=\I u = \I w \in [\frac{1}{\sqrt n\vphi_7(n)},\frac{2\sqrt{2}}{\sqrt n}]$,
	$$\frac{I_{t_k,y}}{y}\leq 2\sqrt{2}\sqrt{n}\vphi_7(n).$$
Since $f'_{t_k}(u)=\hat{f}'_{t_k}(w)$,	
			$$|f'_{t_k}(u)|\leq \frac{c_0}{y^\be}\leq c_0 \vphi_7(n)^\be\sqrt{n}^\be.$$
Also
	$$|(f^n_{t_k})'(u)|\leq C(y^{-1}+1)\leq 2C\vphi_7(n)\sqrt{n},$$
where the first inequality comes holds for all hydrodynamic normalized conformal map of $\mb{H}$.

It follows that
	\begin{eqnarray}
|\hat{f}_{t_k}(w)-\hat{f}^n_{t_k}(w)|&\leq &\frac{2\vphi(n)}{\sqrt n}\exp\left\{\sqrt{\frac{1+\be}{2}}\log(c\vphi_7(n)\sqrt{n})+\log\log2\sqrt{2n}\vphi_7(n)\right\}\notag\\
&=:&\frac{\vphi_9(n)}{\sqrt{n}^{1-\bexp}}.
\label{cara}
	\end{eqnarray}
{\bf End of the proof of Theorem \ref{theo:deterministic}.}
 
It follows from (\ref{main ineq}), (\ref{hyper}) and (\ref{cara}) that
$$|\g(s+t_k)-\g^n(r+t_k)|\leq \frac{\vphi_8(n)}{{\sqrt n}^{1-\be}}+\frac{\vphi_9(n)}{(\sqrt{n})^{1-\bexp}} =: \frac{\vphi_{10}(n)}{(\sqrt{n})^{1-\bexp}}$$
for all $r\in [\frac{1}{n},\frac{2}{n}]$.

Using the uniform continuity (\ref{uni cont}) of $\g$, we obtain
$$|\g(r)-\g^n(r)|\leq \frac{\vphi_{11}(n)}{(\sqrt{n})^{1-\bexp}}$$
for all $r\in [t_{k+1},t_{k+2}]$ and $0\leq k\leq n-2$, hence for all $r \in [0,1]$.\qed

{\bf Proof of Corollary \ref{theo: Hilbert curve}.}\\
It was shown in \cite{MR05}, \cite{Lind} and \cite{LR} that the unbounded complements of the hulls generated by the driving function are John domains. Therefore, it follows from \cite[Chapter 5]{Pommerenke} that the condition (\ref{condition}) is satisfied.\qed
%
%
%
%
\section{Applications}\label{sec:app}
\subsection{Variants of the algorithm}
{\bf Variant 1.} 	The conclusion of Theorem \ref{theo:deterministic} still holds for every $\lbda^n$ that satisfies:
	\begin{equation}
	|\lambda^n(t_k)-\lambda(t_k)|\leq \frac{\vphi(n)}{\sqrt n}
	\end{equation}
and
	\begin{equation}
	\lambda^n(t)=\sqrt{n}(\lbda^n(t_k)-\lbda^n(t_k))\sqrt{t-t_k} + \lbda^n(t_k) \mbox { on } [t_k,t_{k+1}].
	\end{equation}

	Indeed, the main inequality (\ref{main ineq}) has a slightly change
$$
	|\g(s+t_k)-\g^n(r+t_k)|=|\hat{f}_{t_k}(z)-\hat{f}^n_{t_k}(w)|
\leq |\hat{f}_{t_k}(z)-\hat{f}_{t_k}(w+\lbda^n(t_k)-\lbda(t_k))|$$
\begin{equation}
+|{f}_{t_k}(w+\lbda^n(t_k))-f^n_{t_k}(w+\lbda^n(t_k))|.
\end{equation}

	We can see that $z$ and $w+\lbda^n(t_k)-\lbda(t_k)$ are still in the same box $A_{n,c,\phi}$. Hence the same argument follows.
\\

{\bf Variant 2.}
	Lemma \ref{theo:G} and the property (\ref{eq:tip in box}), hence Lemma \ref{theo:w in box}, are still true if instead of square-root-interpolating $\lbda^n$ on $[t_k,t_{k+1}]$, we consider any function interpolating between $\lbda^n(t_k)$ and $\lbda^n(t_{k+1})$ such that when we run the Loewner equation for $0\leq t\leq \frac{1}{n}$, the resulting $\g^n_{t_k}$ has non decreasing imaginary part on $[0,\frac{1}{n}]$.

	In particular, linear-interpolating $\lbda^n$
		$$\lbda^n(t)=\lbda^n(t_k)+n(\lbda^n(t_{k+1})-\lbda^n(t_k)) (t-t_k)\mbox { on } [t_k,t_{k+1}]$$
also give the same conclusion as in Theorem \ref{theo:deterministic} (see \cite{KKN} for linear driving functions).
\\

{\bf Variant 3.} Instead of using tilted slits on each small interval one can use vertical slits \cite{K09}. In this case, $\lbda^n$ is a step function:
	$$\lbda^n(t)=\lbda(t_k) \mbox{ for } t\in [t_k,t_{k+1})$$
	and 
	$$ \g^n_{t_k}(t) = \lbda(t_k)+2i\sqrt{t}\mbox { on } [0,\frac{1}{n}).$$
	However $\g^n$ defined by (\ref{form of g^n}) is not a curve. We can do as follows. We compute $\g^n$ at discrete points $t=t_0, t_1, \cdots, t_n$. Then connect them with a straight line in that order. As plotted in \cite{K09}, it is almost impossible to distinguish this curve and the one from the (main) algorithm. Indeed, the same proof of Theorem \ref{theo:deterministic} is carried over for this algorithm and the same conclusion of this theorem holds. 
\subsection{Speed of convergence to $SLE_\ka$}

We can estimate the speed of convergence of the algorithm to $\g^\ka:=$ SLE$_\ka$ with $\ka\neq 8$.
\begin{corollary}\label{appl-sle}
	There exist constants $c_1,c_2,c_3,c_4>0$ depending on $\ka$ such that
	$$\pr\left(||\g^\ka-\g^m||_{[0,1],\infty}\leq \frac{c_1(\log m)^{c_2}}{\sqrt{m}^{1-\bexp}}\mbox{ for all } m\geq n\right)\geq1-\frac{c_3}{n^{c_4}}.$$
\end{corollary}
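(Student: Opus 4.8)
The plan is to run Theorem~\ref{theo:deterministic} on the random driving function $\lbda=\ska B$ of SLE$_\ka$, conditionally on a large event on which the hypotheses (\ref{wk}) and (\ref{condition}) hold with \emph{explicitly controlled} data, and then to track how the output subpower function $\widetilde\vphi$ depends on that data. Two quantitative inputs are needed. First, for the modulus: by L\'evy's theorem together with Gaussian concentration (equivalently the Garsia--Rodemich--Rumsey inequality), there is an a.s.\ finite random variable $\Phi\ge 1$ with $\pr(\Phi>s)\le C_\ka e^{-c_\ka s^{2}}$ such that on $\{\Phi\le s\}$ the function $\ska B$ is weakly H\"older-$1/2$ with the \emph{deterministic} subpower function $\vphi_{(s)}(x):=s\,(1+\sqrt{\log(x\vee e)})$. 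Second, for the derivative: for a suitable $\be=\be(\ka)\in(0,1)$ there is an a.s.\ finite random constant $C_0^\ast\ge 1$ for which (\ref{condition}) holds (this is the Rohde--Schramm input; one may take $y_0=1$, since on the event below the hulls $K_t$, $t\le1$, have diameter $\lesssim C_0^\ast$, so $|\hat f_t'(iy)|\lesssim C_0^\ast\le C_0^\ast y^{-\be}$ also for $y\in[y_0,1]$), and—from the SLE derivative moment bounds $\mb{E}\big[|\hat f_t'(iy)|^{\zeta}\big]\lesssim y^{-\rho}$, a union bound over a dyadic mesh of pairs $(t,y)\in[0,1]\times(0,1]$, and Koebe distortion to pass between mesh points—a stretched-exponential tail $\pr(C_0^\ast>s)\le C_\ka e^{-c_\ka s^{a}}$ for some $a=a(\ka)>0$. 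Put $\Lambda:=\max(\Phi,C_0^\ast)$, so $\pr(\Lambda>s)\le C_\ka e^{-c_\ka s^{a}}$ after shrinking $a,c_\ka$.

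Next I would revisit the proof of Theorem~\ref{theo:deterministic} to record the dependence of $\widetilde\vphi=\vphi_{11}$ on its inputs. Each of $\vphi_{2},\dots,\vphi_{11}$ is built from $\vphi$ and $c_0$ by finitely many operations $\psi\mapsto C\psi$, $\psi\mapsto\psi^{C}$, $\psi\mapsto\max(\psi_1,\psi_2)$, together with one factor linear in $c_0$ (in $\vphi_8$) and one factor of the form $c_0^{\bexp}$ times a $\log$-term (in $\vphi_9$); hence there are constants $A_1,A_2>0$, depending only on $\be$ and on absolute constants, with $\widetilde\vphi(m)\le C\,(1+c_0)^{A_1}\,\vphi(m)^{A_1}\,(\log(m\vee e))^{A_2}$. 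Applying this on the event $\mathcal{E}_n:=\{\Lambda\le(\log n)^{1/a}\}$ with $c_0=\Lambda$ and $\vphi=\vphi_{(\Lambda)}$, Theorem~\ref{theo:deterministic} yields, for every $m\ge n$,
$$\|\g^\ka-\g^m\|_{[0,1],\infty}\ \le\ \frac{\widetilde\vphi(m)}{\sqrt{m}^{\,1-\bexp}}\ \le\ \frac{C\,\Lambda^{A_3}\,(\log(m\vee e))^{A_4}}{\sqrt{m}^{\,1-\bexp}},$$
with $A_3,A_4>0$ depending only on $\ka$ (here $\Lambda^{A_3}$ absorbs $(1+\Lambda)^{A_1}\vphi_{(\Lambda)}(m)^{A_1}\le C\Lambda^{2A_1}(\log(m\vee e))^{A_1/2}$, etc.).

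To conclude: on $\mathcal{E}_n$ we have $\Lambda\le(\log n)^{1/a}\le(\log m)^{1/a}$ for every $m\ge n$, so $\Lambda^{A_3}\le(\log m)^{A_3/a}$ and the displayed bound becomes $\|\g^\ka-\g^m\|_{[0,1],\infty}\le c_1(\log m)^{c_2}/\sqrt{m}^{\,1-\bexp}$ for all $m\ge n$, with $c_1:=C$ and $c_2:=A_3/a+A_4$. On the other hand
$$\pr\big(\mathcal{E}_n^{\,c}\big)=\pr\!\big(\Lambda>(\log n)^{1/a}\big)\le C_\ka\,e^{-c_\ka(\log n)}=\frac{C_\ka}{n^{c_\ka}},$$
so the assertion holds with $c_3:=C_\ka$ and $c_4:=c_\ka$ (the finitely many small $n$, and the finitely many $m<1/y_0^2$ if one does not arrange $y_0=1$, are absorbed by enlarging $c_3$, since on $\mathcal{E}_n$ both curves lie in a fixed bounded region by Lemma~\ref{theo:oscillation}).

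The step I expect to be the main obstacle is the stretched-exponential tail $\pr(C_0^\ast>s)\le C_\ka e^{-c_\ka s^{a}}$ for the random constant in (\ref{condition}): Theorem~\ref{theo:deterministic} converts a bound $c_0$ on $|\hat f'|$ into a factor \emph{polynomial} in $c_0$ inside $\widetilde\vphi$, and because the target rate $(\log m)^{c_2}/\sqrt{m}^{\,1-\bexp}$ leaves only a poly-logarithmic margin, a merely polynomial tail for $C_0^\ast$ would give only a poly-logarithmically small failure probability in $n$, not the polynomial $n^{-c_4}$ asserted. Establishing this tail uniformly in $t\in[0,1]$ from the known $\zeta$-th moment estimates on $|\hat f_t'(iy)|$ (via the mesh-and-distortion argument, the reverse flow for time homogeneity, and a judicious choice of $\be(\ka)$ above the optimal H\"older exponent so the needed moments are finite) is the technical heart; the remaining steps are bookkeeping with subpower functions.
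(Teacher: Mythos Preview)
Your proposal has a genuine gap at exactly the point you flag yourself: the stretched-exponential tail $\pr(C_0^\ast>s)\le C_\ka e^{-c_\ka s^{a}}$ for the random constant in (\ref{condition}) is not available. For SLE$_\ka$ with $\ka>0$ the map $|\hat f_t'(iy)|$ has finite moments only up to a critical order, so at a single scale the tail of $y^\be|\hat f_t'(iy)|$ is merely polynomial; the dyadic mesh plus Koebe distortion argument you sketch then gives at best $\pr(C_0^\ast>s)\lesssim s^{-\zeta}$ for some $\zeta=\zeta(\ka)$. As you correctly observe, a polynomial tail for $C_0^\ast$ forces $\Lambda$ on your good event to be at least a power of $n$, not a power of $\log n$, and then the factor $\Lambda^{A_3}$ swamps the $(\log m)^{c_2}$ budget. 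So the approach, as written, does not close.

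The paper avoids this obstacle by exploiting the role of $y_0$ in Theorem~\ref{theo:deterministic} rather than the role of $c_0$. It cites \cite[Proposition~4.2]{JL} in the summed form
\[
\sum_{m\ge n}\sum_{j=1}^{2^{2m}}\pr\big\{|\hat f'_{(j-1)2^{-2m}}(i2^{-m})|\ge 2^{\be' m}\big\}\le \frac{c_3}{2^{nc_4}},
\]
which (after distortion) yields: with probability at least $1-c_3 n^{-c_4/2}$, the bound $|\hat f_t'(iy)|\le c_5 y^{-\be}$ holds for all $t\in[0,1]$ and all $0<y\le 1/\sqrt n$, with a \emph{deterministic} constant $c_5$. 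Setting $y_0=1/\sqrt n$, the hypothesis of Theorem~\ref{theo:deterministic} is met with $c_0=c_5$ fixed, so $\widetilde\vphi$ is a fixed function of the form $c(\log\cdot)^{c'}$, and the theorem applies precisely to all $m\ge n=1/y_0^2$. Combined with the oscillation estimate (e.g.\ \cite[Theorem~3.2.4]{LL}) giving $\pr(\mathrm{osc}\text{ is bad for some }m\ge n)\le c_2/n^2$, the polynomial failure probability follows immediately. The key conceptual difference from your plan is that one should not try to control $\sup_{y\in(0,1]} y^\be|\hat f'_t(iy)|$ at all; rather, restrict to $y\le 1/\sqrt n$, where the union bound still converges but now against a \emph{fixed} threshold, so that no randomness enters $\widetilde\vphi$.
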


	In other words, this corollary implies Theorem \ref{theo:random}. 

	We will apply Theorem \ref{theo:deterministic} to the case $\lbda(t)=\ska B_t$. It follows from \cite[Theorem 3.2.4]{LL} that there exist constants $c_1$ (depending on $\ka$) and $c_2$ such that
	\begin{equation}
	\label{sle1}
	\pr\left\{osc(\lbda;\frac{1}{m})\geq c_1\sqrt{\frac{\log m}{m}}\mbox{ for all } m\geq n \right\}\leq \frac{c_2}{n^2}.
	\end{equation}

	Notice that in Theorem \ref{theo:deterministic}, if $\vphi(n)=\sqrt{\log n}$ in (\ref{wk}) then goes through the proof, the subpower function in (\ref{differ}) is of the form $c(\log n)^{c'}$ for some constants $c$ and $c'$.

	It follows from \cite[Proposition 4.2]{JL} that there exist constants $\be'\in (0,1)$, $c_3$ and $c_4>0$ depending on $\ka$ such that
	$$\sum^{\infty}_{m=n}\sum^{2^{2m}}_{j=1}\pr\left\{|\hat{f}'_{(j-1)2^{-2m}}(i2^{-m})|\geq 2^{\be' m}\right\}\leq \frac{c_3}{2^{nc_4}}.$$

	This implies that
	$$\pr\left\{|\hat{f}'_{(j-1)2^{-2m}}(i2^{-m})|\leq 2^{\be' m}\mbox{ for all } 1\leq j\leq 2^{2m}, m\geq n \right\}\geq 1-\frac{c_3}{2^{n c_4}}.$$
	
	Thus there exist $c_5>0$ and $\be\in(\be',1)$ such that
	$$
	\pr\left\{|\hat{f}'_t(iy)|\leq \frac{c_5}{y^\be}\mbox{ for all } 0\leq y \leq 2^{-n}, t\in [0,1] \right\}\geq 1-\frac{c_3}{2^{nc_4}},
	$$
	or
	\begin{equation}
	\label{sle2}
	\pr\left\{|\hat{f}'_t(iy)|\leq \frac{c_5}{y^\be}\mbox{ for all } 0\leq y \leq \frac{1}{\sqrt n}, t\in [0,1] \right\}\geq 1-\frac{c_3}{n^{c_4/2}}.
	\end{equation}

	Combining (\ref{sle1}), (\ref{sle2}) and Theorem \ref{theo:deterministic}, we get
		$$\pr\left\{ ||\g^\ka-\g^m||_{[0,1],\infty}\leq \frac{c_6(\log m)^{c_7}}{\sqrt{m}^{1-\bexp}}\mbox{ for all } m\geq n \right\}\geq1-\left(\frac{c_2}{n^2}+\frac{c_3}{n^{c_4/2}}\right)$$
which proves Corollary \ref{appl-sle}. \qed
%
%
%
%

%
%
%
%
%
\subsection{Random walk algorithm to simulate SLE curves}
	This algorithm \cite[Section 2]{K09} is based on the Donsker's invariance theorem: a scaling limit of simple random walk converges in distribution to the Brownian motion.

	For fix $\ka\geq 0$. We choose $a\in (0,\frac{1}{2}]$ such that 
		$$\ka=\frac{4(1-2a)^2}{a(1-a)}.$$
	Let $f_1(z)=(z+1-a)^{1-a}(z-a)^a$, $f_2(z)=(z+a)^a(z-(1-a))^{1-a}$.
	For every $i\geq 1$, choose $\phi_i=f_1$ or $\phi_i=f_2$ with equal probability. Then we compute inductively $F_n =F_{n-1}\circ \phi_n$ with $F_0=id$. The map $F_n$ is conformal from $\mb{H}$ to $\mb{H}$ minus a slit curve. After rescaling and translating so that this slit curve has the half plane capacity 1, we get a simple curve $\g^n$. More explicitly, $\g^n$ is generated by $\lbda^n$ whose formula is
	$$\lbda^n(t_k)=\ska \frac{S_k}{\sqrt n} \mbox{ for all } t_k,$$
$$ \mbox{ and } \lbda^n(t)=\sqrt{n}(\lbda^n(t_{k+1})-\lbda^n(t_k))\sqrt{t-t_k} + \lbda^n(t_k) \mbox { on } [t_k,t_{k+1}],$$
where $S_k=X_1+\cdots+X_k$, $X_i's$ are iid and $\pr(X_i=1)=\pr(X_i=-1)=\frac{1}{2}$.

By Donsker's invariance theorem, $\lbda^n\stackrel{d}{\rightarrow} \ska B|_{[0,1]}$ on $C([0,1],||.||_\infty)$ . So $\mb{H}\backslash\g^n([0,1])\stackrel{d}{\rightarrow} \mb{H}\backslash \g^\ka ([0,1])$ in the context of Carath\'eodory kernel convergence \cite{L} and Cauchy transforms of probability measures \cite{Bauer}. Kennedy \cite{K09} raised a question whether $\g^n$ converges in distribution to $\g^\ka$.

We now show that $\g^n$ converges in distribution to $\g^\ka$ under the sup-norm of $C([0,1])$ when $\ka\neq 8$. Indeed, it follows from \cite[Theorem 7.1.1]{LL} that for each $n$, we can couple $\lbda^n$ and the Brownian motion in the same probability space such that
\begin{equation}
\label{sle3}
\pr\{\max_{0\leq j\leq n}{|\lbda^n(t_j)-\ska B_{t_j}|}\geq \frac{C \ska\log n}{\sqrt n}\}\leq C n^{-3}
\end{equation}
for some universal constant $C>0$.

Hence from $(\ref{sle1}), (\ref{sle2}), (\ref{sle3})$ and the discussion of Variant 1, there exist constants $c_8$ and $c_9$ depending on $\ka$ such that
$$\pr\left\{||\g^n-\g^\ka||_{[0,1],\infty}\leq\frac{c_8(\log n)^{c_9}}{\sqrt{n}^{1-\bexp}}       \right\}\geq 1-\frac{c_2}{n^2}-\frac{c_3}{n^{c_4/2}}-\frac{C}{n^3}.$$
This implies that $\g^n$ converges in distribution to $\g^\ka$.


\bibliographystyle{alpha}
\bibliography{SLE}

\begin{thebibliography}{KNK04}

\bibitem[Bau03]{Bauer}
Robert~O. Bauer.
\newblock Discrete {L}\"owner evolution.
\newblock {\em Ann. Fac. Sci. Toulouse Math. (6)}, 12(4):433--451, 2003.

\bibitem[BJK12]{BJK}
Christian Bene\u{s}, Fredrik {Johansson Viklund}, and Michael Kozdron.
\newblock On the rate of convergence of loop-erased random walk to {SLE}(2).
\newblock {\em Comm. Math. Phys.}, 2012.
\newblock To appear.

\bibitem[dB85]{Branges}
Louis de~Branges.
\newblock A proof of the {B}ieberbach conjecture.
\newblock {\em Acta Math.}, 154(1-2):137--152, 1985.

\bibitem[JRW12]{JRW}
Fredrik {Johansson Viklund}, Steffen Rohde, and Carto Wong.
\newblock On the continuity of {SLE}$_\kappa$ in $\kappa$.
\newblock 2012.
\newblock Preprint.

\bibitem[JV12]{J}
Fredrik Johansson~Viklund.
\newblock Convergence rates for loop-erased random walk and other {L}oewner
  curves.
\newblock 2012.
\newblock Preprint.

\bibitem[JVL11]{JL}
Fredrik Johansson~Viklund and Gregory~F. Lawler.
\newblock Optimal {H}\"older exponent for the {SLE} path.
\newblock {\em Duke Math. J.}, 159(3):351--383, 2011.

\bibitem[Ken07]{K07}
Tom Kennedy.
\newblock A fast algorithm for simulating the chordal {S}chramm-{L}oewner
  evolution.
\newblock {\em J. Stat. Phys.}, 128(5):1125--1137, 2007.

\bibitem[Ken09]{K09}
Tom Kennedy.
\newblock Numerical computations for the {S}chramm-{L}oewner evolution.
\newblock {\em J. Stat. Phys.}, 137(5-6):839--856, 2009.

\bibitem[KNK04]{KKN}
Wouter Kager, Bernard Nienhuis, and Leo~P. Kadanoff.
\newblock Exact solutions for {L}oewner evolutions.
\newblock {\em J. Statist. Phys.}, 115(3-4):805--822, 2004.

\bibitem[Law05]{L}
Gregory~F. Lawler.
\newblock {\em Conformally invariant processes in the plane}, volume 114 of
  {\em Mathematical Surveys and Monographs}.
\newblock American Mathematical Society, Providence, RI, 2005.

\bibitem[Lin05]{Lind}
Joan~R. Lind.
\newblock A sharp condition for the {L}oewner equation to generate slits.
\newblock {\em Ann. Acad. Sci. Fenn. Math.}, 30(1):143--158, 2005.

\bibitem[LL10]{LL}
Gregory~F. Lawler and Vlada Limic.
\newblock {\em Random walk: a modern introduction}, volume 123 of {\em
  Cambridge Studies in Advanced Mathematics}.
\newblock Cambridge University Press, Cambridge, 2010.

\bibitem[LMR10]{LMR}
Joan Lind, Donald~E. Marshall, and Steffen Rohde.
\newblock Collisions and spirals of {L}oewner traces.
\newblock {\em Duke Math. J.}, 154(3):527--573, 2010.

\bibitem[LR10]{LR}
Joan Lind and Steffen Rohde.
\newblock Spacefilling curves and phases of the {L}oewner equation.
\newblock 2010.
\newblock Preprint.

\bibitem[LSW04]{LSW04}
Gregory~F. Lawler, Oded Schramm, and Wendelin Werner.
\newblock Conformal invariance of planar loop-erased random walks and uniform
  spanning trees.
\newblock {\em Ann. Probab.}, 32(1B):939--995, 2004.

\bibitem[MR05]{MR05}
Donald~E. Marshall and Steffen Rohde.
\newblock The {L}oewner differential equation and slit mappings.
\newblock {\em J. Amer. Math. Soc.}, 18(4):763--778 (electronic), 2005.

\bibitem[Pom92]{Pommerenke}
Ch. Pommerenke.
\newblock {\em Boundary behaviour of conformal maps}, volume 299 of {\em
  Grundlehren der Mathematischen Wissenschaften [Fundamental Principles of
  Mathematical Sciences]}.
\newblock Springer-Verlag, Berlin, 1992.

\bibitem[Roh]{R}
Steffen Rohde.
\newblock Personal communication.

\bibitem[RS05]{RS}
Steffen Rohde and Oded Schramm.
\newblock Basic properties of {SLE}.
\newblock {\em Ann. of Math. (2)}, 161(2):883--924, 2005.

\bibitem[RY99]{RY99}
Daniel Revuz and Marc Yor.
\newblock {\em Continuous martingales and {B}rownian motion}, volume 293 of
  {\em Grundlehren der Mathematischen Wissenschaften [Fundamental Principles of
  Mathematical Sciences]}.
\newblock Springer-Verlag, Berlin, third edition, 1999.

\bibitem[Sch00]{Sch00}
Oded Schramm.
\newblock Scaling limits of loop-erased random walks and uniform spanning
  trees.
\newblock {\em Israel J. Math.}, 118:221--288, 2000.

\bibitem[Smi01]{Smirnov01}
Stanislav Smirnov.
\newblock Critical percolation in the plane: conformal invariance, {C}ardy's
  formula, scaling limits.
\newblock {\em C. R. Acad. Sci. Paris S\'er. I Math.}, 333(3):239--244, 2001.

\bibitem[Smi10]{Smirnov10}
Stanislav Smirnov.
\newblock Conformal invariance in random cluster models. {I}. {H}olomorphic
  fermions in the {I}sing model.
\newblock {\em Ann. of Math. (2)}, 172(2):1435--1467, 2010.

\bibitem[SS05]{SchrammSheffield05}
Oded Schramm and Scott Sheffield.
\newblock Harmonic explorer and its convergence to {${\rm SLE}_4$}.
\newblock {\em Ann. Probab.}, 33(6):2127--2148, 2005.

\bibitem[SS09]{SchrammSheffield09}
Oded Schramm and Scott Sheffield.
\newblock Contour lines of the two-dimensional discrete {G}aussian free field.
\newblock {\em Acta Math.}, 202(1):21--137, 2009.

\bibitem[SS12]{SS}
Scott Sheffield and Nike Sun.
\newblock Strong path convergence from {L}oewner driving function convergence.
\newblock {\em Ann. Probab.}, 40(2):578--610, 2012.

\bibitem[Won12]{C}
Carto Wong.
\newblock Smoothness of {L}oewner slits.
\newblock 2012.
\newblock Preprint.

\end{thebibliography}
\end{document}